\newcommand{\N}{{\mathbf N}}                   
\newcommand{\Z}{{\mathbf Z}}                   
\newcommand{\R}{{\mathbf R}}                   
\newcommand{\C}{{\mathbf C}}                   
\newcommand{\CP}{\mathbf{P}^1}                
\renewcommand{\H}{{\mathbf H}}                   
\newcommand{\Ct}{\widehat{\mathbf C}}          
\newcommand{\CPt}{\widehat{\mathbf{P}}^1}     
\newcommand{\E}{{\mathcal E}}                  
\newcommand{\F}{{\mathcal F}}                  
\newcommand{\G}{{\mathcal G}}                  
\newcommand{\Vt}{\widehat{V}}			
\newcommand{\Et}{\widehat{{\mathcal E}}}        
\newcommand{\Ft}{\widehat{F}}		       	
\newcommand{\Pt}{\widehat{P}}        		
\renewcommand{\tt}{\widehat{\theta}}                   
\renewcommand{\O}{{\mathcal O}}                 
\newcommand{\Nahm}{{\mathcal N}}              
\renewcommand{\d}{\mbox{d}}                      
\newcommand{\End}{{\mathcal E}nd}
\newcommand{\h}{{\mathfrak h}}
\newcommand{\e}{{\mathbf e}}			
\newcommand{\f}{{\mathbf f}}			
\newcommand{\g}{{\mathbf g}}			
\newcommand{\dbar}{\bar{\partial}}
\newcommand{\Dirac}{\partial\!\!\!\backslash}
\DeclareMathOperator{\res}{res}
\DeclareMathOperator{\Gl}{Gl}
\DeclareMathOperator{\Gr}{Gr}
\DeclareMathOperator{\diag}{diag}
\DeclareMathOperator{\Id}{Id}
\DeclareMathOperator{\im}{im}
\DeclareMathOperator{\rank}{rk}
\DeclareMathOperator{\red}{red}
\newtheorem{prop}{Proposition}[section]
\newtheorem{assn}[prop]{Assumption}
\newtheorem{rk}[prop]{Remark}
\newtheorem{clm}[prop]{Claim}
\newtheorem{defn}[prop]{Definition}
\newtheorem{thm}[prop]{Theorem}
\title[Nahm transformation]
{Nahm transformation for parabolic Higgs bundles on the projective line --- case of non-semisimple residues}
\author{Szil\'ard Szab\'o}
\begin{document}

\maketitle

\section{Introduction}

In this paper we define Nahm transformation for some singular 
Higgs bundles on the complex projective line 
with finitely many 
first-order poles and one second-order pole. 
Let $\C \subset \CP$ denote the complex affine and projective lines, endowed with the Euclidean metric, 
and with standard holomorphic coordinate denoted by $z\in \C$. 
We consider a parabolic harmonic bundle $(V, F_i^j , \dbar^{\E}, \theta , h)$ on $\CP$ with logarithmic singularities at some fixed points 
$z_1,\ldots,z_n\in\C$ and a second-order pole with semi-simple leading order term at infinity. 
Let $\Ct$ be a different copy of the complex affine line with coordinate $\zeta$, and $\CPt$ the associated projective line. 
The aim of this paper is to construct a transformed Higgs bundle $(\Vt, \dbar^{\Et} , \tt )$ 
on $\CPt$ and study the properties of the mapping 
\begin{equation}\label{eq:Nahm}
    \Nahm: (V, F_i^j, \dbar^{\E},\theta ) \mapsto (\Vt, \Ft_i^j, \dbar^{\Et} ,\tt ),
\end{equation}
called Nahm transformation, on moduli spaces. 
In the case where the residues of $\theta$ at the singular points are semisimple, the transform was defined in \cite{Sz-these}, 
and its properties were further studied in 
\cite{A-Sz}, \cite{Sz-plancherel}. 
Therefore, in this note we will focus on the case where the residues of $\theta$ at the singular points are not necessarily semisimple.

\section{Construction of Nahm transform}\label{sec:Nahm}

In this section we define the parabolic Higgs bundle underlying the Nahm transform of a harmonic bundle on $\CP$, 
without going into the technical details of the constructions. 
In the later sections, we develop the technical tools necessary to make the construction rigorous, and sketch the proof of the results 
stated in this section. 

Let $C$ be a complex analytic curve. We denote by $\O_C$ and $K_C$ its structure sheaf and its canonical sheaf respectively, 
and by $\Omega^k$ the sheaf of locally $L^2$ differential $k$-forms on $C$. 

Let $V$ be a smooth vector bundle over $C$ of rank $r\geq 2$ and $\E$ be a holomorphic vector bundle with underlying smooth vector bundle $V$. 
The space of local sections of $\E$ may be conveniently described as the kernel of a partial differential operator $\dbar^{\E}$ 
of type $(0,1)$ on $V$. Let 
$$
  \theta : \E \to \E \otimes_{\O_C} K_C
$$
be a (possibly singular) morphism of $\O_C$-modules, called a Higgs field. 
The couple $(\E, \theta)$ is then called a Higgs bundle. 
Let $h$ be a smooth fibrewise Hermitian metric on $V$. Denote by $D_{Ch}$ the Chern connection associated 
to $\dbar^{\E}$ and $h$ and by $\theta^{\ast}$ the $h$-adjoint of $\theta$. 
Then $(V, \dbar^{\E},\theta , h)$ is called a harmonic bundle if and only if the connection 
\begin{equation}\label{eq:D}
   D = D_{Ch} + \theta + \theta^{\ast}
\end{equation}
is flat, i.e. its curvature $F_D$ vanishes. If this is the case, we let 
\begin{equation}\label{eq:nabla}
   \nabla = D^{1,0}
\end{equation}
denote the meromorphic integrable connection on the holomorphic vector bundle $E$ given by $D^{0,1}$. 

From now on, we let $(V, \dbar^{\E},\theta , h)$ denote a harmonic bundle over $\CP$ with some singularities. 
We will now spell out explicitly our assumptions on its singularities, as well as the definition of a compatible parabolic structure $F_i^j$. 
Fix finitely many distinct points $z_1,\ldots,z_n\in\C$. 
We consider the compactification $\CP$ of $\C$ by the point at infinity $z_0= [0:1]$. 
Let $\E$ be given the structure of a quasi-parabolic bundle on $\C$ with parabolic points 
$z_0,z_1,\ldots,z_n$, i.e. we assume that for every $i\in\{0,\ldots,n\}$ we are given a decreasing filtration 
of $\C$-vector subspaces of the fiber of $V$ at $z_i$
\begin{equation}\label{eq:parfiltr}
	\{ 0\} = F_i^{l_i} \subset F_i^{l_i-1} \subset \cdots \subset F_i^1 \subset F_i^0 = V_{z_i}
\end{equation}
of some length $1\leq l_i\leq r$. 
For $i\in\{0,\ldots,n\},j\in\{ 0,\ldots,l_i-1\}$ consider the graded vector spaces associated to (\ref{eq:parfiltr}) 
\begin{equation}\label{eq:pargr}
	\Gr_{i}^j = \Gr_{F_i}^j = F_i^j/F_i^{j+1}.
\end{equation}
We fix parabolic weights $\{\alpha_i^j\}$ for $i\in\{0,\ldots,n\}, j\in\{ 0,\ldots,l_i-1\}$ satisfying 
\begin{equation}\label{eq:parweights}
	1> \alpha_i^{l_i-1} > \cdots > \alpha_i^0 \geq 0. 
\end{equation}
For every $0 \leq i \leq n$ we will take a local holomorphic trivialisation $\{ \e_i^s\}_{s=1}^r$ of $\E$ near $z_i$ 
compatible with the filtration $F_i^j$ in the sense that $F_i^j$ is spanned by the evaluations at $z_i$ of the vectors 
$$
  \e_i^1, \ldots, \e_i^{\dim F_i^j}. 
$$
With respect to such a compatible basis, we will use the diagonal matrix 
$$
  \diag(\alpha_i^j )_{j=0}^{l_i-1}
$$
consisting of the parabolic weights, each $\alpha_i^j$ repeated with multiplicity equal to $\dim \Gr_{i}^j$. 
We assume that 
$$
	\theta\in \Gamma(\CP, \End(\E)\otimes K_{\CP}(\log(z_1)+\cdots +\log(z_n) + 2\cdot z_0))
$$ 
is a Higgs field on $\E$ with logarithmic singularities at $z_1,\ldots,z_n$ and a second-order pole 
with semi-simple leading order term at infinity, compatible with the parabolic structure. 
We will call such a Higgs field singular. 
By compatibility in the logarithmic case we mean that the residue 
\begin{equation}\label{eq:residue}
	\res_{z_i}(\theta) = \theta
	((z-z_i)\partial_z)
\end{equation}
of $\theta$ at $z_i$ preserves the filtration $F_i^{\bullet}$: 
\begin{equation}\label{eq:logHiggsfield}
	\res_{z_i}(\theta) : F_i^j \to F_i^j
\end{equation}
for every $i\in\{1,\ldots,n\}, j\in\{ 0,\ldots,l_i-1\}$ . 
For the second-order pole $z_0$ at infinity, we require an equality 
\begin{equation}\label{eq:Higgsinfinity}
	\theta = \frac A2 \d z + B \frac{\d z}{z} + \mbox{lower order terms},
\end{equation}
in some local trivialisation of $\E$, where $A$ is a semi-simple $r\times r$ matrix and 
$B$ any $r\times r$ matrix, both preserving the image of the filtration (\ref{eq:parfiltr}) under the isomorphism 
$\E|_{\infty}\cong \C^r$ given by the trivialisation. 
We denote by $\Pt\subset \Ct$ the eigenvalues of $A$. 
Moreover, let us denote by $H$ the centraliser of $A$ in $\Gl_r(\C)$  and by $\h$ its Lie-algebra. 
Then up to applying a holomorphic gauge transformation near $\infty$ we can arrange that $B\in\h$; 
in what follows we will therefore assume $B\in\h$. 
\begin{clm}\label{clm:A-preserves-F}
 $A$ preserves the filtration (\ref{eq:parfiltr}) if and only if for any $l$ the piece $F_0^l$ in 
(\ref{eq:parfiltr}) is the direct sum of its intersections with the various eigenspaces of $A$. 
\end{clm}
\begin{proof}
The direction $\Leftarrow$ is trivial. For the converse, if for instance 
$$
  \lambda_1 v_1 + \lambda_2 v_2 \in F_0^l
$$
for some $\lambda_j\neq 0$ and $v_j$ in the $\zeta_j$-eigenspace of $A$ with $\zeta_1\neq \zeta_2$ then 
$$
  A(\lambda_1 v_1 + \lambda_2 v_2) = \zeta_1(\lambda_1 v_1 + \lambda_2 v_2) + (\zeta_2 - \zeta_1)\lambda_2 v_2.
$$
Now as by assumption the left-hand side and the first term on the right-hand side belong to $F_0^l$, the 
same thing follows for the second term on the right-hand side, and thus (as $\zeta_2 - \zeta_1\neq 0$) for $\lambda_2 v_2$ too, 
which in turn implies the same thing for $\lambda_1 v_1$ as well. 
The same kind of argument applies for a vector with components in more than just two different eigenspaces.
\end{proof}

By compatibility, $\res_{z_i}(\theta)$ acts on the spaces (\ref{eq:pargr}). Let us denote by $\res_{z_i}(\theta)^j$ this action and let 
$$
	\res_{z_i}(\theta)^j = S_i^j  + N_i^j 
$$
be its decomposition into its semi-simple and nilpotent components respectively. 
We may (and henceforth will) assume that the compatible trivialisations $\{ \e_i^s\}_{s=1}^r$ are chosen so that $S_i^j$ are diagonal for each $i,j$. 
The generalized eigenspaces of $S_i^j$ then define a block-decomposition of $\Gr_i^j$. 
To each such block there corresponds a single eigenvalue of $\res_{z_i}(\theta)^j$, and the eigenvalues are different on different blocks. 
\begin{clm}\label{clm:-theta-block-diagonal}
The sections $\e_i^s$ can be chosen so that for each $j$ the restriction of $\theta$ to the subbundle of $\E$ spanned by the vectors 
$\e_i^s$ with $j(s) = j$ is block-diagonal with respect to the block-decomposition defined by $S_i^j$. 
\end{clm}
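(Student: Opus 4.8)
The plan is to establish this as a local holomorphic normal form near each singular point, obtained order by order through holomorphic changes of frame. Fix $i$ and work in the local coordinate $w = z - z_i$, writing the logarithmic Higgs field as $\theta = \Theta(w)\,\frac{\d w}{w}$ with $\Theta$ holomorphic and $\Theta(0) = R := \res_{z_i}(\theta)$. The frame $\{\e_i^s\}$ splits the fibre, near $z_i$, as $\bigoplus_j W_j$ with $W_j$ spanned by the vectors of level $j(s) = j$, and each $W_j$ decomposes further as $\bigoplus_a W_j^a$ into the eigenblocks of the diagonal semisimple part $S_i^j$. Because $\theta$ is $\O_C$-linear, a holomorphic change of frame $g(w)$ acts on it purely by conjugation, $\Theta \mapsto g^{-1}\Theta g$, with no derivative term. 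I would therefore look for $g$ with $g(0) = \Id$ that is block-diagonal with respect to the level decomposition $\bigoplus_j W_j$ --- so that it preserves the filtration (\ref{eq:parfiltr}) and keeps each $S_i^j$ diagonal --- and that renders every diagonal block $\Theta_{jj}(w)$ commuting with $S_i^j$, i.e.\ block-diagonal in the sense of the claim.

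The base case is free of charge: $\Theta_{jj}(0) = \res_{z_i}(\theta)^j = S_i^j + N_i^j$, and by the Jordan decomposition $N_i^j$ commutes with $S_i^j$, so the leading coefficient already preserves the eigenblocks of $S_i^j$. The inductive step removes the obstruction one order at a time. Assume $\Theta_{jj}$ is $S_i^j$-block-diagonal modulo $w^k$, and let $\Xi_k$ denote the off-block-diagonal part of its order-$w^k$ coefficient. Conjugating by $g = \Id + w^k G$, with $G = \bigoplus_j G_{jj}$ supported on the diagonal levels and off-block-diagonal within each level, alters the order-$w^k$ coefficient of $\Theta_{jj}$ by $[R^j, G_{jj}]$ and leaves all lower orders untouched, where $R^j = S_i^j + N_i^j$ is the $jj$-block of $R$ on the associated graded. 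Choosing $G_{jj} = -(\ad_{R^j})^{-1}\Xi_k$ then cancels $\Xi_k$.

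The key algebraic point, and the only real obstacle, is the invertibility of $\ad_{R^j}$ on the space of off-block-diagonal endomorphisms of $\Gr_i^j$. On the block $\mathrm{Hom}(W_j^a, W_j^b)$ the operator $\ad_{S_i^j}$ acts as multiplication by $\lambda_a - \lambda_b$, which is nonzero for $a \neq b$ precisely because the eigenvalues of $\res_{z_i}(\theta)^j$ on distinct blocks differ; hence $\ad_{S_i^j}$ is invertible on the off-block-diagonal part. Since $N_i^j$ commutes with $S_i^j$, the operator $\ad_{N_i^j}$ is nilpotent and commutes with $\ad_{S_i^j}$, so $\ad_{R^j} = \ad_{S_i^j} + \ad_{N_i^j}$ is invertible there as well, and its inverse preserves the off-block-diagonal subspace. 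This makes the inductive step well defined and confirms that only off-block-diagonal corrections are introduced, so the block-diagonal parts and all lower orders stay put.

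Finally, the successive transformations assemble into a formal frame change $g(w) = \Id + \sum_{k\geq 1} w^k G^{(k)}$. Convergence is where a connection would run into resonances, but here it is automatic: because $\theta$ carries no derivative term, the operator solved at every order is the single fixed isomorphism $\ad_{R^j}$ --- there is no integer shift of eigenvalues with $k$, hence no small divisors --- so a standard majorant estimate bounds the growth of the $G^{(k)}$ geometrically and $g$ converges on a neighbourhood of $z_i$. The resulting holomorphic frame keeps $S_i^j$ diagonal, preserves the filtration, and makes each $\Theta_{jj}$ commute with $S_i^j$; running the argument at every singular point produces the sections $\e_i^s$ asserted in the claim.
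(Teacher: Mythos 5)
Your argument is a valid alternative route for the logarithmic points $z_1,\ldots,z_n$, and it is genuinely different from the paper's. The paper disposes of the claim in three lines: near $z_i$ the Higgs bundle $(\E,\theta)$ splits holomorphically into generalized eigen-subbundles of $\theta$ (holomorphic functional calculus/spectral projectors, so convergence never arises), each summand is compatible with the filtration (\ref{eq:parfiltr}) by the argument of Claim \ref{clm:A-preserves-F} applied to the filtration-preserving residue, and one takes a union of graded-compatible frames of the summands. Your order-by-order conjugation with the homological operator $\ad_{R^j}$ is the power-series shadow of that splitting; your key observations are all correct: conjugation of a Higgs field carries no $g^{-1}\d g$ term, so the operator inverted at order $k$ is the fixed $\ad_{R^j}$ with no $k$-shift (this is exactly what distinguishes the Higgs case from the connection case, where resonances $\ad_R - k$ occur), $[R^j,G_{jj}]$ is purely off-block-diagonal since $N_i^j$ commutes with $S_i^j$, and $\ad_{R^j}$ is invertible on off-block matrices because distinct blocks of $S_i^j$ carry distinct eigenvalues. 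The convergence sketch is standard and could be made rigorous most cleanly by solving the single implicit equation for $g(w)$ with the finite-dimensional holomorphic implicit function theorem, using invertibility of the linearization at $w=0$. What the paper's route buys beyond brevity is a \emph{stronger} normal form that your construction does not give: since the eigen-subbundles are $\theta$-invariant, the full matrix of $\theta$ is block-diagonal with respect to the eigenvalue grouping, including entries \emph{between different levels} $j(s)\neq j(s')$; your gauge $G$ is constrained to be level-diagonal, so cross-level, cross-eigenvalue entries of $\Theta$ survive. The literal statement of the claim only concerns the blocks $\Theta_{jj}$, so you do prove it, but the stronger property is what is actually used later in (\ref{eq:theta-on-eis}), where the sum over $s'$ runs over \emph{all} indices with the same eigenvalue and no others.

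There is also a genuine gap of scope: your normal form $\theta = \Theta(w)\,\d w/w$ with $\Theta(0)=\res_{z_i}(\theta)$ is only valid at the logarithmic points, whereas the claim is also invoked at $i=0$ (see the last line of the proof of the proposition on $\theta_\alpha$, which uses Claim \ref{clm:-theta-block-diagonal} at $z_0$ together with an $A$-eigenbasis). At $z_0$ the pole is second order, (\ref{eq:Higgsinfinity}): in the coordinate $u=1/z$ one has $\theta=\Theta(u)\,\d u/u^2$ with $\Theta(0)$ essentially $-A/2$, while the block decomposition in the claim is defined by the semisimple part of the \emph{graded residue}, i.e.\ by the subleading coefficient $B\in\h$. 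So your base case $\Theta_{jj}(0)=S_i^j+N_i^j$ fails there, and worse, the leading homological operator $\ad_{\Theta(0)}=\ad_{-A/2}$ vanishes identically on $\h$, precisely where the blocks of $S_0^j$ live, so $(\ad_{\Theta(0)})^{-1}$ does not exist on the relevant off-block space. The repair is a two-step reduction: first kill the components of $\Theta$ transverse to $\h$ using invertibility of $\ad_A$ there (again with no resonances, since there is still no derivative term), splitting $\E$ into $A$-eigen-subbundles; on each such subbundle the leading term is scalar, the field is logarithmic after subtracting it, and your one-step argument then applies to the residue. The paper's formulation "$\E$ splits holomorphically according to the eigenvalues of $\theta$" covers the regular and irregular points uniformly and avoids this case distinction altogether.
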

\begin{proof}
 The bundle $\E$ splits holomorphically as a direct sum of vector subbundles corresponding to various eigenvalues of $\theta$. 
 By compatibility of $\res_{z_i}(\theta)$ with $F_i^{\bullet}$, each such direct summand is a direct sum of its graded pieces for $F_i^j$, 
 see Claim \ref{clm:A-preserves-F}. The union of a local holomorphic trivialisation for the graded pieces of the direct summands for all 
 possible choices fulfills the desired property. 
\end{proof}

By the Jordan-H\"older theorem there is an increasing filtration $W_{i,\bullet}^j$ of $\Gr_{F_i}^j$ associated to $N_i^j$ satisfying 
\begin{enumerate}
\item for all $k\in\Z$, $N_i^j$ maps $W_{i,k}^j$ into $W_{i,k-2}^j$
\item for all $k\in\N$ the map $(N_{i}^j)^k$ is an isomorphism $\Gr^{W_i^j}_k\Gr_{F_i}^j\to\Gr^{W_i^j}_{-k}\Gr_{F_i}^j$ 
(in the notation $(N_{i}^j)^k$ the index $j$ refers to restriction of $N_i$ to $\Gr_{F_i}^j$, whereas the upper index $k$ stands for 
$k$-fold composition of $N_i^j$ with itself). 
\end{enumerate} 
The filtration $W_{i,\bullet}^j$ is called a weight filtration; it is not unique, however its length and the dimensions of its graded pieces 
$$
  \Gr_{i,k}^j = \Gr^{W_i^j}_k\Gr_{F_i}^j
$$
are unique. Observe that in the case $i=0$, the assumption $B\in\h$ implies $N_{0,j}\in\h$ for every $j$.

A Hermitian metric $h$ in $\E$ in some neighborhood of $z_i$ ($i\in\{1,\ldots,n\}$) is said to be compatible with 
$\theta$ if and only if it is mutually bounded with the diagonal metric 
\begin{equation}\label{eq:modelmetric}
	h_0 = \diag (|z-z_i|^{2\alpha_i^{j(s)}} (-\log |z-z_i|)^{k(s)})_{s=1}^r
\end{equation}
with respect to some (or equivalently, any) holomorphic trivialisation $\{ \e_i^s\}_{s=1}^r$ compatible with the filtrations $F$. 
Here $j(s)$ refers to the largest $j\in\{ 0,\ldots,l_i-1\}$ such that $\e_i^s\in F_i^j$ and 
$k(s)$ refers to the smallest $k\in\Z$ such that $\e_i^s\in W_{i,k}^{j(s)}$.  
For $i=0$, we require an analogous behaviour, with $z^{-1}$ replacing the local coordinate $z-z_i$: 
\begin{equation}\label{eq:modelmetricinf}
	h_0 = \diag (|z|^{-2\alpha_0^j(s)} (\log |z|)^{k(s)})_{s=1}^r.
\end{equation}
For a compatible harmonic metric, for all $0 \leq i \leq n$ the diagonal matrix consisting of the parabolic weights of $E = \ker (D^{0,1})$ is given by 
\begin{equation}\label{parabolic-relationship}
 \diag(\beta_i^s )_{s=1}^r = \diag(\alpha_i^j - 2 \Re (S_i^j ) )_{j=1}^{l_i-1} ,
\end{equation}
where the arguments of $\diag$ on the right-hand side are diagonal matrices of dimension $\dim \Gr_i^j$ each. 
Observe that $\beta_i^s$ does not depend on $s$ on any eigenspace of $S_i^j$. 
The singularity of $\nabla = D^{1,0}$ at $z_i$ for all $1 \leq i \leq n$ is logarithmic, and at infinity $\nabla$ has a singularity with Katz-invariant $1$. 
The second-order term of $\nabla$ at infinity is simply $A$ by the results of \cite{Biq-Boa}, 
that is to say twice the second-order term of $\theta$: 
\begin{equation}\label{eq:Dinfinity}
 D = \d + A \d z + O(z^{-1}) \d z, 
\end{equation}
with respect to some holomorphic trivialization of $E$. 
Finally, the relationship between the graded pieces of the residue of the Higgs filed and that of the integrable connection for the filtration $F_i$ is 
\begin{equation}\label{eq:residue-relationship}
 \res_{z_i}(\theta ) = \frac{\res_{z_i}(\nabla ) - \diag(\beta_i^s )_{s=1}^r}{2}, 
\end{equation}
c.f. the table in the Synopsis of \cite{Sim}.

From now on we let $(V, F_i^j, \dbar^{\E},\theta , h)$ denote a harmonic bundle on $\CP$ with parabolic structure and admissible harmonic metric, 
and singularity behaviour fixed as above. We now make important assumptions necessary to carry out our construction. 
\begin{assn}\label{assn:main}
 For any $i,j$, 
\begin{enumerate}
\item \label{assn:main0}
 for $i = 0$, $0$ is not an eigenvalue of $\res_{z_0}(\theta)^j$; 
 \item \label{assn:main1} 
 for $i > 0$ if $\alpha_i^j=0$ then the nilpotent part $N_i^j$ of the endomorphism $\res_{z_i}(\theta)^j$ acts trivially on the $0$-eigenspace 
 of $\res_{z_i}(\theta)^j$; 
 \item \label{assn:main2} 
 for $i > 0$ if $\alpha_i^j > 0$ then $0$ is not an eigenvalue of $\res_{z_i}(\theta)^j$.
\end{enumerate}
\end{assn}

 

Let us note a straightforward consequence. 

\begin{clm}\label{assn:main3} 
 If the condition (\ref{assn:main0}) above holds then $(\E, \theta)$ has no Higgs subbundle of the form $(\O_{\CP} (m), \zeta \d z)$ 
 for any $\zeta\in\C$. 
\end{clm}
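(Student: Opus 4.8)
The plan is to argue by contradiction and to localize everything at the irregular point $z_0=\infty$, since condition (\ref{assn:main0}) is a statement there. Suppose $(\O_{\CP}(m),\zeta\,\d z)$ were a Higgs subbundle of $(\E,\theta)$ for some $\zeta\in\C$. Passing to the saturation (which only raises $m$ and leaves the induced Higgs field equal to the same scalar $\zeta\,\d z$) I may assume it is a genuine line subbundle $L\subset\E$, so a local holomorphic generator $s$ of $L$ near $z_0$ has nonzero value $v:=s(z_0)$ in the fibre. The crucial feature is that the Higgs field prescribed on $L$ is a pure multiple of $\d z$ with no logarithmic $\frac{\d z}{z}$ part; I want to show this forces $v$ into the kernel of the residue at infinity.

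Concretely, I would pass to the coordinate $w=1/z$ at $z_0$ and to the trivialization realizing (\ref{eq:Higgsinfinity}). There $\d z=-w^{-2}\,\d w$ and $\frac{\d z}{z}=-w^{-1}\,\d w$, so $\theta=\bigl(-\tfrac{A}{2}w^{-2}-Bw^{-1}+C(w)\bigr)\d w$ with $C$ holomorphic at $w=0$, while $\theta(s)=(\zeta\,\d z)s$ becomes $\theta(s)=-\zeta w^{-2}s\,\d w$. Expanding $s=v+s_1w+\cdots$ and matching powers of $w$, the coefficient of $w^{-2}$ gives $\tfrac{A}{2}v=\zeta v$, so $v\in\ker(A-2\zeta\,\Id)$ (in particular $2\zeta$ is an eigenvalue of $A$, otherwise $v=0$ already), and the coefficient of $w^{-1}$ gives $(\tfrac{A}{2}-\zeta)s_1=-Bv$. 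This last equation is the heart of the matter: since $A$ is semisimple, $\tfrac{A}{2}-\zeta$ vanishes on $\ker(A-2\zeta\,\Id)$ and is invertible on the sum of the other eigenspaces, so its image is exactly that complementary sum; projecting the equation onto $\ker(A-2\zeta\,\Id)$ annihilates the left-hand side. Because $B\in\h$ centralizes $A$ it preserves $\ker(A-2\zeta\,\Id)$, whence $Bv\in\ker(A-2\zeta\,\Id)$ equals its own projection, and the equation forces $Bv=0$.

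Since $v\neq0$, this says $0$ is an eigenvalue of $B=\res_{z_0}(\theta)$; as $B$ preserves the filtration $F_0^\bullet$, it is then an eigenvalue of the induced action $\res_{z_0}(\theta)^j$ on some graded piece $\Gr_0^j$, contradicting (\ref{assn:main0}) and proving the claim. I expect the main obstacle to be precisely this second matching step: one must correctly identify the solvability obstruction for $s_1$ as the vanishing of the $\ker(A-2\zeta\,\Id)$-component of $Bv$, and then use the centralizer hypothesis $B\in\h$ to upgrade this to $Bv=0$ --- without $B\in\h$ one would only control a projection of $Bv$, not $Bv$ itself. A secondary point to settle is the identification of the logarithmic coefficient $B$ with $\res_{z_0}(\theta)$ up to the sign and trivialization conventions, so that (\ref{assn:main0}) applies directly.
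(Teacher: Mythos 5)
Your proposal is correct and is at bottom the same argument as the paper's: the paper dismisses the claim in one line by observing that the residue at infinity of $\zeta\,\d z$ is $0$, so a subbundle $(\O_{\CP}(m),\zeta\,\d z)$ would force $0$ to be an eigenvalue of $\res_{z_0}(\theta)$, contradicting Assumption \ref{assn:main}~(\ref{assn:main0}). What you have written is the detailed verification behind that ``immediate'' step --- the saturation, the $w^{-2}$ and $w^{-1}$ coefficient matching, and the use of $B\in\h$ to upgrade the solvability obstruction to $Bv=0$ exactly (correctly noting that at a second-order pole the residue is only well-behaved thanks to this normalization) --- all of which is sound but not a different route.
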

\begin{rk}
 This property is an analogue of an instanton being without flat factors, c.f. Definition 3.2.2 of \cite{DK}.  
 Notice that by (\ref{eq:Higgsinfinity}) such a flat factor could only exist for $\zeta$ an eigenvalue of $A/2$. 
\end{rk}

\begin{proof}
 This is immediate, as the eigenvalue of the residue of $\zeta \d z$ at infinity is $0$. 
\end{proof}

Now, given $\zeta \in \Ct \setminus \Pt$ we define a twisted flat connection on $\CP$ by 
\begin{equation}\label{Dzeta}
  D_{\zeta} = D - \zeta \d z. 
\end{equation}
We consider the twisted elliptic complex 
\begin{equation}\label{eq:elliptic-complex-Dzeta}
 	0\to V\xrightarrow{D_{\zeta}}  V\otimes \Omega^1 \xrightarrow{D_{\zeta}}  V\otimes \Omega^2 \to 0. 
\end{equation}
It easily follows from the form (\ref{eq:Dinfinity}) that for any $\zeta_j \in \Pt$ the flat sections of $V$ for $D_{\zeta}$ have exponential behaviour 
$$
  \exp (-(\zeta_j - \zeta)z + P(\log |z|))
$$
on the $\zeta_j$-eigenspace of $A$ for some function $P$ of at most polynomial growth. In particular, this complex has trivial $L^2$-cohomology in degree $0$. 
By a duality argument, the same then follows for its $L^2$-cohomology in degree $2$ too. 
In particular, it follows from continuity of the index that the dimension of its first $L^2$-cohomology space is independent of $\zeta \in \Ct \setminus \Pt$. 
We then define the fiber $\Vt_{\zeta}$ of the transformed vector bundle $\Vt$ as the first $L^2$-cohomology space of (\ref{eq:elliptic-complex-Dzeta}). 
In Section \ref{sec:Fredholm} we will show that $\Vt_{\zeta}$ has an equivalent description as the kernel of the twisted Laplace-operator 
$$
  \Delta_{\zeta} = D_{\zeta} D_{\zeta}^* + D_{\zeta}^* D_{\zeta}
$$
on its $L^2$-domain. 
By Kodaira and Spencer's Fundamental Theorem \cite{KS}, 
there exists a smooth vector bundle $\Vt$ over $\Ct \setminus \Pt$ with fiber over $\zeta$ given by $\Vt_{\zeta}$. 
In Section \ref{sec:Dolbeault} we show that $\Vt_{\zeta}$ is isomorphic to the first hypercohomology space of a twisted Dolbeault complex. 
As a consequence, we obtain a holomorphic bundle $\Et$ over $\Ct \setminus \Pt$ with underlying smooth vector bundle $\Vt$ and a meromorphic Higgs field $\tt$ on $\Et$. 
Furthermore, as we explain in Section \ref{sec:par}, this identification provides us with a natural extension of $\Et$ to $\CPt$ and 
even a transformed parabolic structure $\Ft_i^j$. 
The main result of the paper is Theorem \ref{thm:main}, stating that the transformed Higgs field is meromorphic, it is compatible 
with the parabolic structure $\Ft_{i}^{\bullet}$, and that the transform (\ref{eq:Nahm}) respects the Dolbeault complex structures of the 
moduli spaces.

\section{Fredholm theory}\label{sec:Fredholm}

Much of the analysis has been carried out in Chapter 2 of \cite{Sz-these}, so here we will only indicate the differences in the argument that one needs in order to take into 
account non-zero nilpotent parts of the residues. In particular, as we assume that the leading-order term at infinity is semi-simple, the local analysis near infinity
is exactly the same as in \cite{Sz-these}. The same holds for the global Hodge-theoretic arguments. 
Therefore, we only need to work locally near a logarithmic point $z_i$ for a fixed $1\leq i \leq n$. 

Let $\varepsilon >0$ be a small real number and consider the disc $B = B_{\varepsilon}(z_i)$ of radius $\varepsilon$ centered at $z_i$. 
Let us define the local norm 
$$
  H^1(\Omega^k) = \left\{ f\in \Omega^k(B)\otimes V | \: \int_B |f|^2 + |D_{Ch}(f)|^2 + |\theta (f)|^2 < \infty \right\}, 
$$
where we use the Euclidean metric on $B$ and the metric $h$ on  $V$ to compute the norms involved. 
Then we have the following analog of Claim 2.4 of \cite{Sz-these}. 
\begin{clm}\label{clm:H1}
 The space $H^1(\Omega^k)$ does not depend on the specific harmonic bundle with fixed singularity behaviour. 
\end{clm}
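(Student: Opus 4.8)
The plan is to show that the space $H^1(\Omega^k)$, defined via the Sobolev-type norm near a logarithmic point $z_i$, is unchanged if we replace the given harmonic bundle by another one with the same singularity data (parabolic weights, eigenvalues and Jordan type of the residues). Since the problem is purely local near a single $z_i$ with $1 \leq i \leq n$, and since the claim for the full $L^2$ picture is already established in \cite{Sz-these} in the semisimple case, the essential point is to verify that the finiteness condition $\int_B |f|^2 + |D_{Ch}(f)|^2 + |\theta(f)|^2 < \infty$ depends only on the singularity behaviour and not on the finer analytic data of $h$, $\dbar^{\E}$, or $\theta$.

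First I would reduce to the model situation. By the definition of compatibility of the metric, $h$ is mutually bounded with the explicit diagonal model metric $h_0$ of (\ref{eq:modelmetric}), and the Higgs field $\theta$ has, by (\ref{eq:logHiggsfield}) and Claim \ref{clm:-theta-block-diagonal}, a prescribed block-diagonal form whose residue has fixed eigenvalues and weight filtration. Mutual boundedness of two metrics means the identity map between the corresponding $L^2$ (and $H^1$) spaces is a bounded isomorphism, so the fibrewise norms $|\cdot|_h$ and $|\cdot|_{h_0}$ define the same space of integrable sections; I would make this precise and note that it also controls $|D_{Ch}f|$ up to bounded terms, since the difference of two Chern connections compatible with mutually bounded metrics is a bounded endomorphism-valued one-form in these norms.

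The key step is then to handle the Higgs-field term $|\theta(f)|^2$ and the connection term together. Writing $\theta = \res_{z_i}(\theta)\,\d z/(z-z_i) + (\text{holomorphic})$ in a compatible trivialisation, the leading singular part is governed entirely by the fixed data $S_i^j + N_i^j$; the holomorphic remainder contributes a term bounded in $H^1$ and hence irrelevant to membership. The presence of a nonzero nilpotent part $N_i^j$ is exactly the new feature compared to \cite{Sz-these}: it introduces the logarithmic factors $(-\log|z-z_i|)^{k(s)}$ into $h_0$, as recorded in (\ref{eq:modelmetric}), and shifts sections between adjacent pieces of the weight filtration $W_{i,\bullet}^j$. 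I would verify that, measured in the model metric with these logarithmic weights, the off-diagonal action of $N_i^j$ maps $H^1$ into $L^2$ with the correct weight and so does not affect the finiteness condition — this is precisely where the matching of $k(s)$ in the metric to the weight filtration of $N_i^j$ is used.

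The main obstacle will be this last point: controlling the interaction between the nilpotent part $N_i^j$ and the logarithmic weights in $h_0$, so as to confirm that the $H^1$-finiteness is insensitive to the choice of harmonic bundle. Concretely one must check that near $z_i$ a section lies in $H^1(\Omega^k)$ if and only if its components satisfy weighted integrability conditions that depend only on the pairs $(\alpha_i^{j(s)}, k(s))$, and that the nilpotent shifts $W_{i,k}^j \to W_{i,k-2}^j$ are compatible with these conditions. Once this weighted local estimate is in place, the independence statement follows by the same mutual-boundedness comparison as in the semisimple case, so that the globalised conclusion of Claim 2.4 of \cite{Sz-these} carries over verbatim.
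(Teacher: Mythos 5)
Your opening move (reduce to an explicit local model) is the same as the paper's, but the step you use to justify the reduction contains a genuine error, and it is precisely where the difficulty of the claim lives. You assert that mutual boundedness of $h$ with the model metric $h_0$ of (\ref{eq:modelmetric}) controls $|D_{Ch}f|$ ``up to bounded terms,'' because the difference of two Chern connections for mutually bounded metrics is a bounded endomorphism-valued one-form. That is false: mutual boundedness is a $C^0$ statement about the metrics and gives no control on $h^{-1}\partial h$, which is what enters the Chern connection. What is actually true in this setting (Theorem 1 of \cite{Sim}, which the paper invokes) is that $D_{Ch}$ and $\theta$ differ from the explicit model operators $D_{Ch,i}$, $\theta_i$ by a perturbation $\tau$ of growth order $O(r^{-1}|\log r|^{-1})$ --- which is \emph{unbounded} near $z_i$. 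Consequently $\tau$ cannot be absorbed as a bounded term; one must instead dominate $|\tau(f\sigma_i^s)|$ by a constant multiple of $|\theta_i(f\sigma_i^s)|$ or of $|D_{Ch,i}(f\sigma_i^s)|$. The paper achieves this by a coercivity trichotomy on the residue eigenvalues $\mu_i^s$ of $\res_{z_i}(\nabla)$: if $\Im(\mu_i^s)\neq 0$, then $|\theta_i(f\sigma_i^s)|^2 \geq c\,|r^{-1}|\log r|^{-1} f|^2$; if $\Im(\mu_i^s)=0$ but $\Re(\mu_i^s)\neq 0$, the analogous lower bound holds for $D_{Ch,i}$ by Fourier theory on the cylinder; and if $\mu_i^s=0$ but $\beta_i^s\neq 0$, one again uses $\theta_i$. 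None of these weighted coercivity estimates appears in your proposal, and without them the comparison of $H^1$ with the model norm does not go through.

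Second, you never invoke Assumption \ref{assn:main} (\ref{assn:main1}), and it is indispensable rather than decorative. In the residual case $\mu_i^s = 0 = \beta_i^s$, neither model operator provides any coercivity, so the perturbation cannot be dominated; the assumption says exactly that in this case the nilpotent part acts trivially on $\sigma_i^s$, so the operator is regular there and $H^1$ is the classical Sobolev space $L^{2,1}$, where the statement is standard. A nontrivial nilpotent block on the zero eigenvalue with zero parabolic weight would defeat the claim, so any correct proof must use this hypothesis. Your final paragraph, which proposes to ``verify'' that the nilpotent shifts $W_{i,k}^j \to W_{i,k-2}^j$ are compatible with the weighted integrability conditions, states the desired conclusion rather than an argument: the logarithmic factors $(-\log|z-z_i|)^{k(s)}$ in $h_0$ enter the paper's proof only through the size of the perturbation and the coercivity constants, not through a direct bookkeeping of the weight filtration, and the actual mechanism handling the nilpotent part is the eigenvalue case analysis together with Assumption \ref{assn:main} (\ref{assn:main1}).
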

\begin{proof}
Fix a unitary frame $\sigma_i^s$ ($1\leq s\leq r$) of $V$ near $z_i$ compatible with the parabolic filtration. 
The statement clearly holds on the regular part of $V$ spanned by the vectors $\sigma_i^s$ where $\theta$ and $D$ are both regular and the parabolic weights vanish: 
indeed, on this component of $V$ the space $H^1$ is simply the usual Sobolev space $L^{2,1}$. 
Let now $(r, \varphi)$ be the polar coordinates on $B$. 
According to Theorem 1 of \cite{Sim}, the matrix forms of $D_{Ch}, \theta$ with respect to this trivialisation differ from some model 
by an endomorphism-valued one-form $\tau$ of growth order $O(r^{-1}|\log r|^{-1})$. The model reads as 
\begin{align*}
 D_{Ch, i} & = \d + \sqrt{-1} \Re (\res_{z_i}(\nabla ))\d \varphi \\
 \theta_{i} & = \frac{\res_{z_i}(\nabla ) - \diag(\beta_i^s )_{s=1}^r}{2 (z-z_i)} \d z.
\end{align*}
(See e.g. \cite{Sz-these} (1.18)-(1.19).) Furthermore, we may assume that $\res_{z_i}(\nabla )$ is upper triangular. 
Denote by $\mu_i^s$ the eigenvalue of $\res_{z_i}(\nabla )$ corresponding to the basis element $\sigma_i^s$ and by $\lambda_i^s$ the corresponding eigenvalue of $\res_{z_i}(\theta )$.

We need to prove that the norm $H^1$ defined above is equivalent to the analogous norm $H^1_{i}(\Omega^k)$ obtained from this model. 
For this it is sufficient to show that for any $f\in \Omega^k(B)$ the norm of the image $\tau(f \sigma_i^s )$ under the perturbation term is bounded above by a suitable constant multiple of 
$|D_{Ch, i} (f \sigma_i^s)|$ or by a suitable constant multiple of $|\theta_{i}(f \sigma_i^s)|$. 

By our first observation, it is sufficient to focus on vectors $\sigma_i^s$ such that either $\beta_i^s \neq 0$ or $\sigma_i^s$ is not contained in the kernel of at least one of 
the two endomorphisms $\res_{z_i}(\nabla ), \res_{z_i}(\theta )$. 
Now, the key point is that by Assumption \ref{assn:main} (\ref{assn:main1}) if $\mu_i^s = 0 = \beta_i^s$ then the nilpotent part acts trivially on $\sigma_i^s$ and we are back to the regular case, 
already discussed above. Therefore, we have the following three cases. 

First, if $\Im (\mu_i^s ) \neq 0$ then there exists some $c>0$ only depending on $\varepsilon$ and $\mu_i^s$ such that for any $f\in \Omega^k(B)$ we have 
$$
  | \theta_i(f \sigma_i^s) |^2  \geq c |r^{-1}|\log r|^{-1} f |^2
$$
over $B$. Hence, in this case some fixed multiple of $| \theta_i(f \sigma_i^s) |$ bounds the norm of the perturbation term. 

Second, if $\Im (\mu_i^s ) = 0$ but $\Re (\mu_i^s ) \neq 0$, then by standard Fourier theory on the cylinder there exists some $c>0$ only depending on $\varepsilon$ and $\mu_i^s$ such that 
$$
  |D_{Ch,i}(f \sigma_i^s)|^2 > c |r^{-1}|\log r|^{-1} f |^2 
$$
over $B$. Hence, in this case some fixed multiple of $| D_{Ch,i}(f \sigma_i^s) |$ bounds the norm of the perturbation term. 

Third, if $\mu_i^s = 0$ but $\beta_i^s \neq 0$ then just as in the first case, some fixed multiple of $| \theta_i(f \sigma_i^s) |$ bounds the norm of the perturbation term from above. 
This finishes the proof. 
\end{proof}

Now define the Dirac operator 
$$
  \Dirac_{\zeta} = D_{\zeta} - D_{\zeta}^* : (\Omega^0 \oplus \Omega^2) \otimes V \to \Omega^1 \otimes V 
$$
where $D_{\zeta}^*$ stands for the $h$-adjoint of $D_{\zeta}$. Then we have the analog of Theorem 2.6 of \cite{Sz-these}.
\begin{prop}
 The operator $\Dirac$ is Fredholm from $H^1$ to $L^2$. 
\end{prop}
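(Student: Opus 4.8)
The plan is to prove Fredholmness by establishing the two standard ingredients: a coercivity (Gårding-type) estimate away from the singular point that controls the $H^1$-norm modulo compact perturbations, together with an identification of the relevant model operators near $z_i$ whose invertibility on the appropriate weighted spaces is already understood. First I would reduce the global problem to a local one: on the complement of small discs around $z_i$ and around infinity the connection $D$ and the Higgs field $\theta$ are smooth and the metric $h$ is uniformly bounded, so $\Dirac_\zeta$ is an honest elliptic first-order operator with a parametrix, and the usual elliptic estimate $\|f\|_{H^1}\leq C(\|\Dirac_\zeta f\|_{L^2}+\|f\|_{L^2})$ holds there with the $L^2$-term compact by Rellich. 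Near infinity the semi-simplicity of the leading term $A$ means, as remarked in the excerpt, that the analysis is literally identical to that of \cite{Sz-these}, so I may invoke the Fredholm estimates established there verbatim. Thus everything comes down to the logarithmic points $z_i$, $1\leq i\leq n$.

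At a logarithmic point the strategy is to compare $\Dirac_\zeta$ with its model $\Dirac_{\zeta,i}$ built from $D_{Ch,i}$ and $\theta_i$ as written just before the claim. By Claim \ref{clm:H1} the $H^1$-norm is equivalent to the model norm $H^1_i$, so it suffices to prove that the \emph{model} Dirac operator is Fredholm on the punctured disc $B\setminus\{z_i\}$ with these weights, and that the difference $\Dirac_\zeta-\Dirac_{\zeta,i}$ is relatively compact. The second point follows from the same $O(r^{-1}|\log r|^{-1})$ decay of the perturbation $\tau$ exploited in Claim \ref{clm:H1}: multiplication by a tensor with this decay maps $H^1_i$ into $L^2$ compactly, since the weight gained relative to the model norm is strictly positive on every component that is not already in the regular (ordinary Sobolev) case. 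For the model itself I would separate variables in the polar coordinates $(r,\varphi)$, decompose sections into Fourier modes in $\varphi$, and analyse the resulting family of ordinary differential operators in $r$ indexed by the Fourier frequency and by the basis vector $\sigma_i^s$.

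The Fredholm property of the model then reduces to showing that all but finitely many Fourier modes are uniformly invertible and that the finitely many remaining (low-frequency) modes give rise only to a finite-dimensional kernel and cokernel. This is exactly where the three cases of Claim \ref{clm:H1} reappear, now used to produce a genuine lower bound rather than merely a comparison. The key mechanism is that on each basis vector $\sigma_i^s$ either the term $\sqrt{-1}\Re(\res_{z_i}(\nabla))\,\d\varphi$ in $D_{Ch,i}$ shifts the angular spectrum away from an integer (when $\Re(\mu_i^s)\neq 0$), or the Higgs term $\theta_i$ contributes a factor proportional to $r^{-1}|\log r|^{-1}$ times a nonzero scalar (when $\Im(\mu_i^s)\neq 0$ or $\beta_i^s\neq 0$), and in either situation one obtains a coercive estimate $\|\Dirac_{\zeta,i}f\|_{L^2}\geq c\,\|f\|_{H^1_i}$ outside a finite-dimensional space of modes. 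Assumption \ref{assn:main}(\ref{assn:main1}) is what guarantees that the genuinely dangerous case $\mu_i^s=0=\beta_i^s$, where no such lower bound is available, occurs only on vectors lying in the kernel of the nilpotent part, i.e. precisely the regular summand on which the model is an ordinary elliptic Sobolev problem and hence already Fredholm.

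I expect the main obstacle to be the low-frequency Fourier modes at the logarithmic points, where the ODE analysis in $r$ is genuinely delicate: the indicial roots of the model operator are governed by the eigenvalues $\mu_i^s$ and the weights $\beta_i^s$, and one must check that none of them sits exactly on the $L^2$-threshold so that the weighted ODE has no square-integrable solution that fails to lie in $H^1_i$ (which would spoil either closedness of the range or finiteness of the cokernel). Controlling the borderline weights is the crux; the outer elliptic estimate, the relative compactness of $\tau$, and the gluing of the local and global parametrices into a single global parametrix for $\Dirac_\zeta$ are comparatively routine, and the patching argument then yields that $\Dirac_\zeta:H^1\to L^2$ has finite-dimensional kernel and cokernel and closed range, i.e. is Fredholm.
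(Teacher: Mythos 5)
Your proposal is correct and follows essentially the same route as the paper's own proof: reduce to the logarithmic points $z_i$ (the analysis at infinity being identical to \cite{Sz-these}), compare $\Dirac$ with the local model via the norm equivalence of Claim \ref{clm:H1}, observe that the only candidate for a critical weight at $0$, namely $\mu_i^s = 0 = \beta_i^s$, is precisely the case excluded by Assumption \ref{assn:main} (\ref{assn:main1}) and reduced to the regular Sobolev situation, and then patch local and outer parametrices. The one cosmetic difference is that the paper, following Section 2.2 of \cite{Sz-these}, glues \emph{exact} inverses of the local model operators $\Dirac_i$ rather than invoking relative compactness of the perturbation $\tau$ --- a mechanism worth preferring, since at the borderline weight $r^{-1}|\log r|^{-1}$ the compactness of the multiplication operator you assert is delicate and your "strictly positive weight gain" may fail in the case $\Im(\mu_i^s)=0$, $\Re(\mu_i^s)\in\Z\setminus\{0\}$.
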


\begin{proof}
 Just as in Section 2.2 \cite{Sz-these}, it is possible to glue a parametrix of $\Dirac$ over the complement of a neighborhood of the parabolic points and exact 
 inverses of the local model Dirac operators $\Dirac_i$. The key point is that $0$ is not a critical weight for the translation-invariant model operator $r_i \Dirac_i$, 
 where $r_i = |z-z_i|$. This follows exactly as in Subsection 2.2.1 {\it loc. cit.}: $0$ is a critical weight for the action of $\Dirac_i$ on $\sigma_i^s$ if and only if 
 $\mu_i^s = 0 = \beta_i^s$. However, by Assumption \ref{assn:main} (\ref{assn:main1}) this implies that $\Dirac_i$ is the Dirac operator corresponding to a regular connection and 
 non-singular metric. In this latter case, by Claim \ref{clm:H1} the function space $H^1$ is usual Sobolev space $L^{2,1}$ and the statement is classical. 
\end{proof}

We define the Dirac--Laplace operator 
$$
  \Delta_{\zeta} = - D_{\zeta} D_{\zeta}^* - D_{\zeta}^* D_{\zeta} : \Omega^1 \otimes V \to \Omega^1 \otimes V. 
$$
The proof of Theorems 2.16 and 2.21 of \cite{Sz-these} then imply the following. 
\begin{prop}
 The first $L^2$-cohomology of the elliptic complex (\ref{eq:elliptic-complex-Dzeta}) is canonically isomorphic to the cokernel of the Dirac-operator $\Dirac_{\zeta}$ defined on $H^1$ 
 and to the kernel of the Dirac--Laplace operator defined on its $L^2$ domain. 
\end{prop}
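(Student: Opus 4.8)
The plan is to reduce both asserted isomorphisms to a single identification of the first $L^2$-cohomology with the space $\mathcal{H}^1_\zeta$ of $L^2$-harmonic one-forms, namely those $\beta \in \Omega^1 \otimes V$ in the $H^1$-domain satisfying $D_\zeta \beta = 0$ and $D_\zeta^* \beta = 0$. The only analytic input needed beyond formal Hodge theory is that $\Dirac_\zeta$ is Fredholm from $H^1$ to $L^2$, which was established above, together with Claim \ref{clm:H1} guaranteeing that the $H^1$-norm is the intrinsic one.

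First I would treat the cokernel. Since $\Dirac_\zeta : H^1 \to L^2$ is Fredholm, its image is closed and $\coker \Dirac_\zeta$ is canonically isomorphic to the $L^2$-kernel of the formal adjoint. A direct computation of the adjoint gives, for $\beta \in \Omega^1 \otimes V$,
$$
 \Dirac_\zeta^* \beta = (D_\zeta^* \beta,\, -D_\zeta \beta) \in (\Omega^0 \oplus \Omega^2)\otimes V,
$$
so that $\ker \Dirac_\zeta^* = \{\beta : D_\zeta \beta = 0 = D_\zeta^* \beta\} = \mathcal{H}^1_\zeta$. This already yields $\coker \Dirac_\zeta \cong \mathcal{H}^1_\zeta$.

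Next I would treat the Dirac--Laplace operator. One checks directly that $\Dirac_\zeta \Dirac_\zeta^* = D_\zeta D_\zeta^* + D_\zeta^* D_\zeta = -\Delta_\zeta$ on $\Omega^1 \otimes V$, whence on the $L^2$-domain the identity $\langle -\Delta_\zeta \beta, \beta\rangle = \| \Dirac_\zeta^* \beta \|^2$ gives $\ker \Delta_\zeta = \ker \Dirac_\zeta^* = \mathcal{H}^1_\zeta$. The integration by parts underlying this identity is the delicate step, since one must rule out boundary contributions at the singular points; here I would invoke the decay estimates already used in the Fredholm argument to show that every element of the $L^2$-domain of $\Delta_\zeta$ lies in $H^1$ and that no boundary term survives.

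Finally, to connect $\mathcal{H}^1_\zeta$ with the first $L^2$-cohomology, I would use the weak Hodge--Kodaira decomposition, available precisely because the range of $\Dirac_\zeta$ is closed:
$$
 \Omega^1 \otimes V = \mathcal{H}^1_\zeta \oplus \overline{\im(D_\zeta)} \oplus \overline{\im(D_\zeta^*)}.
$$
Since $\ker(D_\zeta : \Omega^1 \to \Omega^2)$ is orthogonal to $\overline{\im(D_\zeta^*)}$ and contains the other two summands, one obtains $\ker D_\zeta = \mathcal{H}^1_\zeta \oplus \overline{\im(D_\zeta)}$, and hence the first $L^2$-cohomology $\ker D_\zeta / \overline{\im(D_\zeta)} \cong \mathcal{H}^1_\zeta$. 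The main obstacle is the same throughout: verifying that these formal manipulations --- closedness of the ranges, absence of boundary terms in the integration by parts, and coincidence of the $L^2$-cohomology computed with the various natural domains --- remain valid near the logarithmic points in the presence of non-trivial nilpotent residues. This is exactly where Assumption \ref{assn:main} and the content of Theorems 2.16 and 2.21 of \cite{Sz-these} enter, and I would follow those arguments \emph{mutatis mutandis}.
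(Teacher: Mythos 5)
Your proposal is correct and follows essentially the same route as the paper, whose proof of this proposition consists precisely of invoking Theorems 2.16 and 2.21 of \cite{Sz-these}: your identification $\coker\Dirac_{\zeta}\cong\ker\Dirac_{\zeta}^{*}=\mathcal{H}^1_{\zeta}$ via closedness of the Fredholm range, the computation $\Dirac_{\zeta}\Dirac_{\zeta}^{*}=D_{\zeta}D_{\zeta}^{*}+D_{\zeta}^{*}D_{\zeta}$, and the weak Hodge--Kodaira decomposition are exactly the mechanism of those cited proofs. You also correctly isolate the genuinely delicate points --- absence of boundary terms in the integration by parts near the punctures and coincidence of the natural domains --- and defer them to the same decay estimates (resting on Assumption \ref{assn:main} and Claim \ref{clm:H1}) on which the paper's citation relies, so your write-up is, if anything, more explicit than the paper's own.
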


\section{Dolbeault interpretation}\label{sec:Dolbeault}


In this section we write down and analyze an explicit Dolbeault complex admitting an $L^2$-resolution 
for a model Hermitian metric on the fibers and Euclidean metric on the base. 
Our analysis is very similar to 
that of Mochizuki (c.f. Section 5.1 of \cite{Moc}) 
for irregular $\lambda$-connections for the Poincar\'e metric. 
The results in this section also hold for smooth projective curves of any genus, but for 
the sake of simplicity of the exposition we limit ourselves to the case of the projective line. 
In this section we again work with the compatible local holomorphic trivialisations $\e_i^s$ of $\E$ near $z_i$ as in Section \ref{sec:Nahm}. 

As a preliminary observation, let us give a slightly different description of the bundle $\Vt$. 
Namely, the unitary gauge transformation
\begin{equation}\label{eq:gauge-tr}
   \exp \left( \frac 12 (\bar{\zeta} \bar{z} - \zeta z) \right)
\end{equation}
transforms $D_{\zeta}$ into the deformation 
\begin{equation}\label{eq:Dzeta-2}
  D - \frac{\zeta}2 \d z - \frac{\bar{\zeta}}2 \d \bar{z}.
\end{equation}
In particular, this is a self-adjoint deformation, hence it corresponds to 
\begin{align*}
 \dbar^{\E}_{\zeta} & = \dbar^{\E} \\
 \theta_{\zeta} & = \theta - \frac{\zeta}2 \d z .
\end{align*}
The upshot is that the holomorphic bundle $\E$ is preserved under this deformation. 
Now, as (\ref{eq:gauge-tr}) is unitary, it preserves the spaces of $L^2$-sections of $\Omega^k$, and therefore 
induces isomorphisms on the $L^2$-cohomology spaces of the two deformations (\ref{Dzeta}) and (\ref{eq:Dzeta-2}). 
We infer 
$$
  \Vt_{\zeta} = H^1_{L^2} \left( D - \frac{\zeta}2 \d z - \frac{\bar{\zeta}}2 \d \bar{z} \right).
$$
In this section, we will work with this latter deformation, as it is more convenient to study the transform of the underlying 
Higgs bundle of the harmonic bundles. 

Now, let us start the analysis of the $L^2$-cohomology. 
First, one can easily check that in the case $i>0$ we have $\e_i^s\in L^2(\E,h,|\d z|^2)$ for all $s\in\{1,\ldots,r\}$; 
moreover $(z-z_i)^{-1}\e_i^s\in L^2(\E,h,|\d z|^2)$ holds if and only if either 
\begin{enumerate}
\item $\alpha_i^{j(s)}>0$ or
\item $\alpha_i^{j(s)}=0$ and $k(s) < -1$
\end{enumerate}
is fulfilled; finally, $(z-z_i)^{-2}\e_i^s\notin L^2(\E,h,|\d z|^2)$ holds for all $s\in\{1,\ldots,r\}$. 
On the other hand, for $i=0$ we have $\e_0^s\notin L^2(\E,h,|\d z|^2)$ for all $s\in\{1,\ldots,r\}$; 
moreover $z^{-1}\e_0^s\in L^2(\E,h,|\d z|^2)$ holds if and only if either 
\begin{enumerate}
\item $\alpha_0^{j(s)}>0$ or
\item $\alpha_i^{j(s)}=0$ and $k(s) < -1$
\end{enumerate}
is fulfilled; finally, $z^{-2}\e_0^s\in L^2(\E,h,|\d z|^2)$ holds for all $s\in\{1,\ldots,r\}$. 

We now proceed to identify the Dolbeault complex 
\begin{equation}\label{eq:Dolbcopml}
	\F \xrightarrow{\theta} \G \otimes K_{\CP}(2\cdot z_0 + z_1 + \cdots + z_n)
\end{equation}
that admits an $L^2$ Dolbeault resolution for the Euclidean metric. 
The sheaves $\F,\G$ are characterised as certain elementary transforms of the sheaf of local sections of $\E$ 
at $z_0,z_1,\ldots,z_n$ along some subspaces of the fiber $\E_{z_i}$, so that they are equal to $\E$ away from the 
punctures $z_i$ for all $i\geq 0$. 
Let us start by defining a local frame $\{\g_i^s\}_{s=1}^r$ of $\G$ near $z_i$ for $i>0$: 
\begin{enumerate}
\item Case $\alpha_i^{j(s)}=0$
\begin{enumerate}
	\item sub-case $k(s) < -1$: set $\g_i^s=\e_i^s$
	\item sub-case $k(s)\geq -1$: set $\g_i^s=(z-z_i)\e_i^s$
\end{enumerate}
\item Case $\alpha_i^{j(s)}> 0$: set $\g_i^s=\e_i^s$. 
\end{enumerate}
For $i=0$ the frame is
\begin{enumerate}
\item Case $\alpha_0^{j(s)}=0$
\begin{enumerate}
	\item sub-case $k(s) < -1$: set $\g_0^s=z^{-1}\e_0^s$
	\item sub-case $k(s)\geq -1$: set $\g_0^s=z^{-2}\e_0^s$
\end{enumerate}
\item Case $\alpha_i^{j(s)}> 0$: set $\g_0^s=z^{-1}\e_0^s$. 
\end{enumerate}

Let us denote by $\lambda_i^{s}$ the eigenvalue of $S_i^{j(s)}$ corresponding to the vector $\e_i^s$. 
For $i>0$ we construct an explicit frame $\{\f_i^s\}_{s=1}^r$ of $\F$ starting out of $\{ \e_i^s\}_{s=1}^r$, 
depending on $k(s)$ and whether $\alpha_i^{j(s)}$ and $\lambda_i^{s}$ vanish or not. 
Here is the definition of the local frame of $\F$ in the case $i>0$: 
\begin{enumerate}
\item Case $\alpha_i^{j(s)}=0$
\begin{enumerate}
	\item sub-case $\lambda_i^{s}=0$ (and as a consequence necessarily $N_i^{j(s)}(e_i^s(z_i))=0
											$ by Assumption \ref{assn:main} (\ref{assn:main1})): set $\f_i^s = \e_i^s$
	\item sub-case $\lambda_i^{s}\neq 0, k(s)<-1$: set $\f_i^s = \e_i^s$
	\item sub-case $\lambda_i^{s}\neq 0, k(s)\geq -1$: set $\f_i^s = (z-z_i)\e_i^s$
\end{enumerate}
\item Case $\alpha_i^{j(s)}>0$: set $\f_i^s = \e_i^s$. 
\end{enumerate}
For $i=0$ the sheaf of local sections of $\F$ doesn't depend on the eigenvalues of the polar part of $\theta$, as for the Euclidean metric 
we have $|\theta \e |_{h,|d z|^2}\leq K | \e |_{h,|d z|^2}$ for any section $\e$ of $\E$ near $\infty$ and a constant $K>0$ 
only depending on the leading order term $A$ in (\ref{eq:Higgsinfinity}). 
Therefore, a local frame of $\F$ in the case $i=0$ is given by $\f_0^s = \g_0^s$ for $1\leq s \leq r$. 

One can check that the definitions of $\F$ and of $\G$ above are independent of the choice of a compatible local frame $\{\e_i^s\}$. 
In addition, $\F$ is the lower elementary transformation of $\G(z_1+\cdots + z_n)$ at the points $z_i$ along a 
subspace $W\subset \G_{|_{z_i}}$ for $i\in\{1,\ldots,n\}$: 
$$
	0\to \F \to \G(z_1+\cdots + z_n) \to W \to 0; 
$$
in particular, $\F$ is naturally a subsheaf of $\G(z_1+\cdots + z_n)$. 

Now, let us set $D'' = \bar{\partial}^{\E}+ \theta$. 
Consider the twisted Higgs bundle 
\begin{equation}\label{eq:twistedHiggs}
	\theta_{\zeta} = \theta - \frac{\zeta}{2}\Id_{\E}\d z
\end{equation}
and the twisted $D''$-operator 
$$
	D''_{\zeta} = \bar{\partial}^{\E}+ \theta_{\zeta}. 
$$
Notice that as $\F\hookrightarrow\G(z_1+\cdots + z_n)$, the morphism $\theta_{\zeta}$ maps the sheaf $\F$ to 
$$
  \G(z_1+\cdots + z_n)\cong \G\otimes K_{\CP}(2\cdot z_0 + z_1+\cdots + z_n). 
$$

Just as in Proposition 4.13 \cite{Sz-these}, we have the following. 
\begin{prop}\label{prop:L2Dol}
For every $\zeta\in \Ct \setminus \Pt$, the fiber $\Vt_{\zeta}$ is isomorphic to the first $L^2$-cohomology space of the complex
$$
	0\to V\xrightarrow{D''_{\zeta}}  V\otimes \Omega^1 \xrightarrow{D''_{\zeta}}  V\otimes \Omega^2 \to 0, 
$$
and to the first hypercohomology space of the twisted holomorphic Dolbeault complex 
\begin{equation}\label{eq:twistedDolbcopml}
	\F \xrightarrow{\theta_{\zeta}} \G\otimes K_{\CP}(2\cdot z_0 + z_1+\cdots + z_n).
\end{equation}
\end{prop}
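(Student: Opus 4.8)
The plan is to prove the two claimed identifications separately, following the template of Proposition 4.13 of \cite{Sz-these} and isolating the places where the nilpotent parts $N_i^j$ force a new argument. For the first isomorphism I would start from the identification already obtained via the unitary gauge transformation (\ref{eq:gauge-tr}), namely $\Vt_{\zeta}\cong H^1_{L^2}$ of the self-adjoint deformation (\ref{eq:Dzeta-2}). The key point is that (\ref{eq:Dzeta-2}) is itself the flat connection of a harmonic bundle: its Dolbeault operator is $D''_{\zeta}=\dbar^{\E}+\theta_{\zeta}$ with $\theta_{\zeta}$ as in (\ref{eq:twistedHiggs}), one has $\dbar^{\E}\theta_{\zeta}=0$, and flatness survives because $\frac{\zeta}2\d z+\frac{\bar\zeta}2\d\bar z$ is a scalar closed one-form. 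For a harmonic bundle the Kähler-type identities give that its full Dirac--Laplace operator, which is unitarily equivalent to $\Delta_{\zeta}$ via (\ref{eq:gauge-tr}), equals twice the Laplacian of $D''_{\zeta}$, so the two operators share the same space of $L^2$-harmonic one-forms. Combined with the Hodge-theoretic package of Section \ref{sec:Fredholm} (the analogues of Theorems 2.16 and 2.21 of \cite{Sz-these}), which identifies each degree-one $L^2$-cohomology with that harmonic space, this yields $\Vt_{\zeta}\cong H^1_{L^2}(D''_{\zeta})$. The local inputs at the logarithmic points are exactly those used in Claim \ref{clm:H1} and in the Fredholmness argument and are controlled by Assumption \ref{assn:main}; the analysis at infinity is unchanged from \cite{Sz-these} because the leading term $A$ is semisimple.

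For the second isomorphism I would first record the sheaf-theoretic meaning of the explicit frames $\f_i^s,\g_i^s$. Using the $L^2$-thresholds listed before (\ref{eq:Dolbcopml}), one checks that $\G\otimes K_{\CP}(2\cdot z_0+z_1+\cdots+z_n)$ is precisely the sheaf of local sections of $\E\otimes K_{\CP}(\ldots)$ that are $L^2$ for the Euclidean metric and $h$, whereas $\F$ is precisely the sheaf of local holomorphic sections $s$ of $\E$ that are themselves $L^2$ and whose image $\theta_{\zeta}s$ lands in that $L^2$-target. The case distinction defining $\f_i^s$ is exactly the prescription for shrinking $\e_i^s$ to $(z-z_i)\e_i^s$ in the one situation ($\alpha_i^{j(s)}=0$, $\lambda_i^s\neq0$, $k(s)\geq-1$) where $\theta_{\zeta}\e_i^s$ would otherwise fail to be $L^2$, while Assumption \ref{assn:main} rules out the degenerate $0$-eigenvalue configurations. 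Since $\theta_{\zeta}$ differs from $\theta$ only by the regular term $-\frac{\zeta}2\Id_{\E}\,\d z$, both identifications and the inclusion $\F\hookrightarrow\G(z_1+\cdots+z_n)$ recorded before the statement are independent of $\zeta$, and $\theta_{\zeta}$ indeed maps $\F$ into $\G\otimes K_{\CP}(\ldots)$.

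I would then resolve each of the two terms of (\ref{eq:twistedDolbcopml}) in the $\dbar$-direction by its fine complex of smooth $L^2$-forms, obtaining a double complex whose total differential is $D''_{\zeta}=\dbar^{\E}+\theta_{\zeta}$ and whose total complex of global sections is, on the compact base $\CP$, the $L^2$-complex of $D''_{\zeta}$. One spectral sequence of this bicomplex computes the hypercohomology $\mathbb{H}^1$ of (\ref{eq:twistedDolbcopml}); the other, using fineness and a partition of unity, computes $H^1_{L^2}(D''_{\zeta})$; comparing in degree one gives the claim. The decisive step, and the main obstacle, is the local $L^2$-Dolbeault lemma at each finite logarithmic point $z_i$: one must prove a weighted $L^2$ $\dbar$-Poincaré lemma for the model metric (\ref{eq:modelmetric}) showing that the degree-zero and degree-one cohomology sheaves of the smooth $L^2$-$\dbar$-complex are exactly $\F$ and $\G\otimes K_{\CP}(\ldots)$, now in the presence of the nilpotent parts $N_i^j$ and their weight filtrations $W_{i,\bullet}^j$. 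This weighted estimate is the genuinely new analytic ingredient beyond the semisimple case of \cite{Sz-these}; at infinity the semisimplicity of $A$ once more reduces the statement to the one established there.
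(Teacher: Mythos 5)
Your proposal takes essentially the same route as the paper's proof: the first identification is obtained, as in the paper, by observing that the Laplacians of the self-adjoint deformation (\ref{eq:Dzeta-2}) and of $D''_{\zeta}$ agree up to a constant (the Weitzenb\"ock/K\"ahler identity for the harmonic bundle), and the second by characterizing $\F$ and $\G\otimes K_{\CP}(2\cdot z_0 + z_1+\cdots + z_n)$ through the $L^2$-conditions (noting the twist $-\tfrac{\zeta}{2}\Id_{\E}\,\d z$ does not affect them) and then comparing hypercohomology with $L^2$-cohomology via the acyclic $L^2$-Dolbeault resolutions. The only difference is one of explicitness: you isolate the local weighted $L^2$ $\dbar$-Poincar\'e lemma at the logarithmic points as the decisive step, which the paper subsumes in its assertion that the resolutions are acyclic, deferring (as you also do) to the analysis of \cite{Moc} and \cite{Sz-these}.
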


\begin{proof}
The first statement follows as both $L^2$-cohomology spaces are isomorphic to spaces of harmonic sections for the Laplace-operators associated to (\ref{eq:Dzeta-2}) and to 
$D''_{\zeta}$ respectively; however, by the Weitzenb\"ock formula, these Laplace-operators agree up to a constant. 

Let us come to the identification in terms of the first hypercohomology space. 
By construction, near the parabolic points the sections of $\F$ are the local meromorphic sections 
$\e$ of $\E$ such that $\theta(\e)$ is also $L^2$, or equivalently, the local $L^2$ sections 
$v$ of $V$ such that $\bar{\partial}^{\E}(v)=0$ and $\theta(v)\in L^2$. 
Notice that for any given $\zeta\in\Ct$ and $v\in L^2$ we clearly also have $\frac{\zeta}{2}v\in L^2$, 
so the same statement holds for $\theta$ replaced by $\theta_{\zeta}$ too. 
By the above analysis, $\theta_{\zeta}$ induces a morphism of $L^2$-resolutions of the sheaves of the complex. 
In particular, the resolutions involved are acyclic, and this then proves the statement. 
\end{proof}


\section{Extension and transformed parabolic structure}\label{sec:par}

The analytic construction of $\Vt$ is only defined over $\Ct \setminus \Pt$. 
In this section, we will use the results of Section \ref{sec:Dolbeault} to extend it over $\CPt$. 
Actually, we will extend the holomorphic vector bundle $\Et$ holomorphically to $\CPt$. 
Furthermore, we will also define a parabolic structure on this extension. 

We start by defining the transformed holomorphic vector bundle $\Et$ on $\Ct$: this is a simple consequence of the identification of $\Vt_{\zeta}$ 
as a first hypercohomology space (Proposition \ref{prop:L2Dol}), because the morphism $\theta_{\zeta}$ depends holomorphically 
(actually even algebraically) on $\zeta$. 

Now we turn to the extension of $\E$ over $\infty \in \CPt$. 
For this purpose, we let $s_0, s_{\infty} \in \O_{\CPt}(1)$ denote the sections such that on the affine chart $\C \subset \CPt$ we have 
$$
  s_0 (\zeta ) = \zeta, \quad s_{\infty} (\zeta ) = 1. 
$$
We now modify (\ref{eq:twistedDolbcopml}) into 
\begin{equation}\label{eq:theta-extension}
   \theta_{\zeta} = \theta \otimes s_{\infty} - \frac{\zeta}{2}\Id_{\E}\d z \otimes s_0 : \pi_1^*\F \to \pi_1^*\G\otimes K_{\CP}(2\cdot z_0 + z_1+\cdots + z_n) \otimes \pi_2^*  \O_{\CPt}(1)
\end{equation}
with $\pi_i$ the projection from $ \CP \times \CPt$ to its $i$'th factor and $\Id_{\E}$ the sheaf morphism induced by the identity of $\E$. 
This is clearly a holomorphic deformation of $\theta$ parametrized by $\CPt$, in particular its index is constant over $\CPt$. 
\begin{prop}\label{prop:hypercohomology}
 The hypercohomology groups of degree $0$ and $2$ vanish for all $\zeta \in \CPt$. 
\end{prop}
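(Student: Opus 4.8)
The plan is to reduce both vanishing statements to a single sheaf-theoretic fact: that the morphism $\theta_{\zeta}$ of (\ref{eq:theta-extension}) is injective as a map of coherent sheaves on $\CP$ for every fixed $\zeta\in\CPt$. Writing $\G' = \G\otimes K_{\CP}(2\cdot z_0 + z_1+\cdots + z_n)$, both $\F$ and $\G'$ are locally free of rank $r$, so $\theta_{\zeta}\colon \F\to\G'$ is injective precisely when it has full rank $r$ at the generic point of $\CP$, equivalently when $\det\theta_{\zeta}$, a section of $\det\G'\otimes(\det\F)^{-1}$, is not identically zero.

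First I would record the consequences of generic injectivity via the hypercohomology spectral sequence of the two-term complex (\ref{eq:twistedDolbcopml}). Since $\CP$ is a curve, the $E_1$-page is concentrated in the rows $q=0,1$ and columns $p=0,1$, giving $\mathbb{H}^0 = \ker(\theta_{\zeta}\colon H^0(\F)\to H^0(\G'))$ and $\mathbb{H}^2 = \coker(\theta_{\zeta}\colon H^1(\F)\to H^1(\G'))$. If $\theta_{\zeta}$ is injective as a sheaf map then $H^0(\F)\to H^0(\G')$ is injective, whence $\mathbb{H}^0 = 0$. For $\mathbb{H}^2$ I would apply Serre duality on $\CP$: the twisted transpose $\theta_{\zeta}^{\,t}\otimes K_{\CP}\colon \G'^{\vee}\otimes K_{\CP}\to \F^{\vee}\otimes K_{\CP}$ has the same generic rank as $\theta_{\zeta}$, hence is again injective, and $(\mathbb{H}^2)^{\vee}$ is canonically the kernel of the induced map $H^0(\G'^{\vee}\otimes K_{\CP})\to H^0(\F^{\vee}\otimes K_{\CP})$, which therefore vanishes. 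Thus both statements follow once generic injectivity of $\theta_{\zeta}$ is established.

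It remains to prove that $\theta_{\zeta}$ has full generic rank for all $\zeta\in\CPt$. For $\zeta=\infty$ this is immediate: on the fibre over $\infty$ the section $s_{\infty}$ vanishes while $s_0$ does not, so (\ref{eq:theta-extension}) specializes to a nonzero scalar multiple of $\Id_{\E}\,\d z$, which is an isomorphism away from the punctures and in particular generically injective. For finite $\zeta$, a failure of generic injectivity would mean that $\frac{\zeta}{2}\,\d z$ is an eigenvalue of $\theta$ at every point of $\CP$, i.e. that the spectral curve of $\theta$ contains the horizontal branch at level $\frac{\zeta}{2}$. The corresponding eigen-subsheaf then saturates to a sub-line-bundle $\O_{\CP}(m)\hookrightarrow\E$ on which $\theta$ acts as $\frac{\zeta}{2}\,\d z$, that is, a Higgs subbundle of the form $(\O_{\CP}(m),\frac{\zeta}{2}\,\d z)$. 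By the Remark following Claim \ref{assn:main3} such a factor can only occur when $\frac{\zeta}{2}$ is an eigenvalue of $A/2$, i.e. when $\zeta\in\Pt$; and Claim \ref{assn:main3}, which holds under Assumption \ref{assn:main} (\ref{assn:main0}), rules it out even there. Hence $\theta_{\zeta}$ is generically injective for every finite $\zeta$, completing the argument.

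As a consistency check, over $\Ct\setminus\Pt$ the same conclusion follows from Proposition \ref{prop:L2Dol} together with the vanishing of the $L^2$-cohomology of (\ref{eq:elliptic-complex-Dzeta}) in degrees $0$ and $2$ established in Section \ref{sec:Nahm}; the content of the present proposition is precisely the extension of that vanishing across the punctures $\Pt$ and the point at infinity, where the $L^2$ interpretation is unavailable. The main subtlety I anticipate is bookkeeping at the boundary of the parameter space and at the parabolic points: one must check that passing to the elementary-transform sheaves $\F$ and $\G'$ does not alter the generic-rank computation, which it does not since $\F=\E=\G$ away from $z_0,\ldots,z_n$, and that the Serre-dual complex is genuinely identified with the twisted transpose complex so that the duality between $\mathbb{H}^2$ and the dual $\mathbb{H}^0$ is canonical.
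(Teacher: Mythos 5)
Your proof is correct, and its engine --- generic injectivity of $\theta_{\zeta}$, checked at $\zeta=\infty$ by inspection (the map degenerates to a nonzero multiple of $\Id_{\E}\,\d z$) and at finite $\zeta$ by extracting from a nonzero kernel sheaf a Higgs sub-line bundle $(\O_{\CP}(m),\tfrac{\zeta}{2}\,\d z)$ excluded by Claim \ref{assn:main3} --- is the same mechanism the paper uses; the differences are in scope and in how degree $2$ is killed. The paper runs the sheaf-theoretic argument only at $\zeta\in\Pt\cup\{\infty\}$ and, for $\zeta\in\Ct\setminus\Pt$, simply quotes the analytic $L^2$-vanishing established after (\ref{eq:elliptic-complex-Dzeta}) together with Proposition \ref{prop:L2Dol}, while remarking that its method would in fact treat any $\zeta\in\CPt$; you carry that method out uniformly over all of $\CPt$, which makes the proposition self-contained and independent of the harmonic-metric analysis, at the mild cost of reproving a known case. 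For degree $2$ the paper uses the other hypercohomology spectral sequence, identifying $\H^2$ with $H^1$ of the cokernel sheaf of $\theta_{\zeta}$: generic bijectivity between locally free sheaves of equal rank makes this cokernel a torsion (skyscraper) sheaf, whose $H^1$ vanishes on a curve, with no duality needed. You instead stay on the first page, compute $\H^2=\coker\left(H^1(\F)\to H^1(\G\otimes K_{\CP}(2\cdot z_0+z_1+\cdots+z_n))\right)$, and kill it by Serre duality plus injectivity of the twisted transpose; this is valid and is essentially the algebraic shadow of the duality argument the paper invoked for the degree-$2$ $L^2$-cohomology, but it is one step longer than the skyscraper observation. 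Two points you should make explicit rather than implicit: generic injectivity of a morphism between locally free sheaves on a smooth curve already gives sheaf-level injectivity (the kernel would be a torsion subsheaf of the locally free $\F$), which is the fact your $H^0$ step actually consumes; and no saturation is needed in the finite-$\zeta$ case, since the kernel sheaf of $\theta_{\zeta}$ on $\E$ is automatically saturated (if $f\,s$ lies in the kernel for some nonzero holomorphic $f$, then $f\,\theta_{\zeta}(s)=0$ forces $\theta_{\zeta}(s)=0$ by torsion-freeness of the target), so any line subbundle of it is already a flat factor of the forbidden form.
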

\begin{proof}
 We have already seen this for $\zeta \in \Ct \setminus \Pt$ (see the discussion after (\ref{eq:elliptic-complex-Dzeta}) and Proposition \ref{prop:L2Dol}). 
 It suffices to prove the claim for $\zeta \in \Pt \cup \{ \infty \}$. 
 The method of the proof below can be used to treat the case of any $\zeta \in \CPt$.
 
 For $\zeta = \infty$, degeneration at level $2$ of the hypercohomology spectral sequence shows that 
 a class in hypercohomology of degree $0$ would be a global holomorphic section of $\E$ annihilated by $-\Id_{\E}/2 \d z$. 
 Now as $\F$ agrees with $\G$ near $\infty$, such a class may not exist. 
 Similarly, for $\zeta = \infty$ the hypercohomology space of degree $2$ is isomorphic to the cohomology 
 of degree $1$ of the cokernel sheaf of $- \Id_{\E} / 2 \d z$. Now as $\F$ agrees with $\G$ near $\infty$, 
 this cokernel sheaf vanishes near $\infty$, hence is a skyscraper sheaf, and its cohomology of degree $1$ is $0$. 
 
 For $\zeta \in \Pt$, the hypercohomology in degree $0$ is isomorphic to the cohomology of degree $0$ of the kernel sheaf of 
 $\theta - \zeta / 2 \Id_{\E}\d z$. However, this latter kernel sheaf is $0$ by Claim \ref{assn:main3}. 
 Similarly, the hypercohomology in degree $2$ is isomorphic to the cohomology of degree $1$ of the cokernel sheaf of 
 $\theta - \zeta / 2 \Id_{\E}\d z$. However, this latter cokernel sheaf is a skyscraper sheaf by 
 Claim \ref{assn:main3}, so its first cohomology vanishes. 
\end{proof}
It follows from the claim and continuity of the index that the dimension of the first hypercohomology spaces of $\theta_{\zeta}$ are 
constant for all $\zeta \in \CPt$. Therefore, this construction gives an extension of the holomorphic vector bundle $\Et$ over $\CPt$. 

Now this extension allows us in particular to compute the rank of $\Vt$. Indeed, for this purpose it is sufficient to compute the 
dimension of the first hypercohomology space of (\ref{eq:theta-extension}) for $\zeta = \infty$, when the morphism specializes to $-\Id_{\E}/2 \d z$. 
By a simple spectral sequence argument, this latter is the cohomology of degree $0$ of the cokernel sheaf 
$$
  \G\otimes K_{\CP}(2\cdot z_0 + z_1+\cdots + z_n) / \F. 
$$
A contemplation of the definitions of $\F$ and $\G$ yields that this is a skyscraper sheaf supported at the singular points $z_i$ for $1\leq i \leq n$. 
Moreover at such a point $z_i$ it is generated by the classes of 
$$
  \e_i^s \frac{\d z}{z-z_i}
$$
for all values of $s$ such that either
\begin{enumerate}
 \item  $\alpha_i^{j(s)} > 0$ or
 \item  $\alpha_i^{j(s)} = 0$ and $\lambda_i^s \neq 0$. 
\end{enumerate}
Indeed, it is easier to see the contrapositive: the only case when $\g_i^s = (z-z_i)\e_i^s$ and $\f_i^s = \e_i^s$ corresponds to having 
$\alpha_i^{j(s)} = 0, \lambda_i^s = 0$ and $k(s) \geq -1$; however this latter inequality follows from the two equalities and Assumption \ref{assn:main} (\ref{assn:main1}), 
so it can be lifted. In particular, introducing the notation $(\Gr_i^0)_0$ for the kernel of $\res_{z_i}(\theta )^0$ on $\Gr_i^0$, the rank of $\Vt$ is 
\begin{equation}\label{eq:transformed-rank}
 \widehat{r} = \rank (\Vt ) = \sum_{i=1}^n \left ( r - \delta_{0, \alpha_i^0} \dim (\Gr_i^0)_0 \right), 
\end{equation}
where $\delta$ is Kronecker's function. 

We now turn our attention to transforming the parabolic structure. 
For this purpose, let us first recall the notion of an $\R$-parabolic sheaf on a complex manifold $X$ 
with divisor $D_{\red}$, a reduced effective Weil divisor on $X$: 
this is a decreasing family $S_{\bullet}$ of coherent sheaves on $X$ indexed by $\R$ so that 
for all $\alpha\in\R$ 
\begin{enumerate}
 \item left-continuity: there exists some $\varepsilon >0$ with $S_{\alpha - \varepsilon} = S_{\alpha}$; \label{R-par-sheaf-1}
 \item we have $S_{\alpha + 1} = S_{\alpha} \otimes \O_X(-D_{\red})$. \label{R-par-sheaf-2}
\end{enumerate}

Then we have the following result (Section 1, \cite{Yok}):
\begin{clm}\label{clm:parabolic}
The categories of $\R$-parabolic locally free sheaves on $X$ with divisor 
$D_{\red}$ and of parabolic bundles on $X$ with divisor $D_{\red}$ are isomorphic.
\end{clm}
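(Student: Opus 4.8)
The plan is to exhibit a pair of mutually inverse functors between the two categories, and to verify that they respect morphisms. In one direction, to an $\R$-parabolic locally free sheaf $S_{\bullet}$ I would associate the parabolic bundle whose underlying locally free sheaf is $\E = S_0$, whose quasi-parabolic filtration along $D_{\red}$ is read off from the jumps of $S_{\bullet}$ in the interval $[0,1)$, and whose weights are exactly the locations of those jumps. Concretely, condition (\ref{R-par-sheaf-2}) gives $S_1 = S_0 \otimes \O_X(-D_{\red})$, so $S_0 / S_1 \cong \E|_{D_{\red}}$, and the images of the inclusions $S_{\alpha} \hookrightarrow S_0$ induce a decreasing filtration $\{ S_{\alpha}/S_1 \}_{\alpha \in [0,1)}$ of $\E|_{D_{\red}}$. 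This is the desired quasi-parabolic filtration, and the values $\alpha$ at which it strictly decreases are the weights.

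The key finiteness observation is that, since $S_0$ and $S_1$ are locally free of the same rank $r$, the quotient $\E|_{D_{\red}}$ is locally free of rank $r$ on $D_{\red}$; a decreasing filtration of a rank-$r$ sheaf can strictly drop at most $r$ times. Hence there are only finitely many jump values $0 \leq \alpha^0 < \cdots < \alpha^{l-1} < 1$, with $l \leq r$, and by left-continuity (\ref{R-par-sheaf-1}) the entire family $\{ S_{\alpha} \}$ on $[0,1)$ is recovered from its values at these points. This produces a genuine (finite) parabolic structure, and the graded pieces $\Gr$ of the two descriptions match by construction.

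In the reverse direction, given a parabolic bundle $(\E, F^{\bullet}, \{\alpha^j\})$ I would set, for $\alpha \in [0,1)$, $j(\alpha) = \#\{ j : \alpha^j < \alpha \}$ and define $S_{\alpha}$ to be the preimage of $F^{j(\alpha)}$ under the restriction map $\E \twoheadrightarrow \E|_{D_{\red}}$, extending to all of $\R$ by $S_{\alpha+1} = S_{\alpha} \otimes \O_X(-D_{\red})$. Since each $F^j$ is a subbundle of $\E|_{D_{\red}}$ and $D_{\red}$ is a reduced divisor, each $S_{\alpha}$ is locally free; periodicity holds by fiat and left-continuity by the choice of $j(\alpha)$, with $S_0 = \E$ and $S_1 = \E \otimes \O_X(-D_{\red})$ as required. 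The two constructions are visibly inverse on objects: passing to jumps and back returns the same filtration by left-continuity, and the other composite is the identity by the definition of $j(\alpha)$. For morphisms, a map of $\R$-parabolic sheaves is a compatible system $\{ S_{\alpha} \to S'_{\alpha} \}$; evaluating at $\alpha = 0$ yields a bundle map $\E \to \E'$, and compatibility with the inclusions translates precisely into the weight-preserving condition defining morphisms of parabolic bundles, while preimage-functoriality gives the converse. Thus both functors are fully faithful and essentially surjective.

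The main point requiring care is the finiteness of the jump set together with the local freeness of the intermediate sheaves $S_{\alpha}$; both hinge on the rank-$r$ bound for $\E|_{D_{\red}}$ and on $D_{\red}$ being reduced so that subbundles of the restriction lift to locally free subsheaves. Once these are in place, the verification that the two functors are mutually inverse, both on objects and on morphisms, is essentially formal.
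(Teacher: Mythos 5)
Your argument is correct and is essentially the paper's own: the paper attributes the statement to Yokogawa and then spells out exactly your two mutually inverse assignments --- defining $\E_{\alpha}$ for $\alpha_i^{l-1} < \alpha \leq \alpha_i^{l}$ as the preimage of $F_i^{l}$ under evaluation at the parabolic point (your $S_{\alpha}$), and conversely recovering the filtration and the weights from the jumps of $S_{\bullet}$ on $[0,1)$. One cosmetic caveat: when $D_{\red}$ has several components the jumps may occur at different values of $\alpha$ on different components, so the correct count is at most $r$ jump values \emph{per component} rather than $r$ in total, which is still all the finiteness your construction needs.
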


For convenience, let us spell out the correspondence in the case $X=C$ a smooth projective curve. 
To a parabolic bundle $\E$ with filtration (\ref{eq:parfiltr}) of $V_{z_i} = \E_{z_i}$ associate the $\R$-parabolic sheaf 
$\E_{\bullet}$ defined as follows. Near the generic point $z\notin D_{\red}$ for every $\alpha\in\R$ we let $\E_{\alpha} = \E$. 
For $i\in\{ 1, \ldots, n \}$ and $\alpha_i^{l-1} < \alpha \leq \alpha_i^{l}$ we define $\E_{\alpha}$ 
in a small neighborhood of $p_i\in D$ not containing any other $p_{i'}$ as the kernel of the composition map 
$$
  \pi_i^l: \E \xrightarrow{eval_{p_i}} \E_{p_i} = F_i^0 \to F_i^0/F_i^{l}. 
$$
By definition, $\E_{\alpha}$ is then a subsheaf of $\E_0$, which is locally free, hence torsion-free. 
It then follows that $\E_{\alpha}$ is torsion-free, and since $C$ is smooth, $\E_{\alpha}$ is locally free. 
For $\alpha\notin [0,1]$ we extend the definition by property (\ref{R-par-sheaf-2}) above. 

Conversely, to an $\R$-parabolic sheaf we associate the vector bundle whose local sections are given by the sheaf $\E_0$, 
with filtration (\ref{eq:parfiltr}) defined as follows: for any vector $v\in \E_{p_i}$ we let 
$v\in F_i^{l}$ if and only if any local section of $\E_0$ extending $v$ (i.e., whose specialization at 
$p_i$ is $v$) is actually a section of $\E_{\alpha_i^l}$. 
This is clearly the inverse of the construction of the previous paragraph.

After this preliminary correspondence, let us set 
\begin{equation}\label{eq:D-red}
 D_{\red} = z_0 + z_1 + \cdots + z_n
\end{equation}
and construct the parabolic structure of $\Et$. 
First, let us modify the definition of $\F$ and $\G$ from Section \ref{sec:Dolbeault} to take into account a parameter $\alpha\in [0,1)$. 
This goes as follows: for any $1 \leq i \leq n$ 
\begin{enumerate}
 \item \label{case:deletion}
 first, for the values $1\leq s \leq r$ such that $\alpha_i^{j(s)} = 0 = \lambda_i^s$ and \emph{any} $\alpha\in\R$ we set 
  $$
    \f_i^s (\alpha ) = \f_i^s, \quad \g_i^s (\alpha ) = \g_i^s;
 $$
 \item for the values $1\leq s \leq r$ such that at least one of $\alpha_i^{j(s)}, \lambda_i^s$ is non-zero and $\alpha \leq \alpha_i^{j(s)}$ we set \label{case:alpha-smaller}
 $$
    \f_i^s (\alpha ) = \f_i^s, \quad \g_i^s (\alpha ) = \g_i^s;
 $$
 \item for the values $1\leq s \leq r$ such that at least one of $\alpha_i^{j(s)}, \lambda_i^s$ is non-zero and $\alpha_i^{j(s)} < \alpha$ we set \label{case:alpha-greater}
 $$
    \f_i^s (\alpha ) = (z-z_i)\f_i^s, \quad \g_i^s (\alpha ) = (z-z_i)\g_i^s. 
 $$
\end{enumerate}
In the case $i=0$ the definitions are the same, up to replacing the local coordinate $z-z_i$ appearing in the third case by $z^{-1}$. 
\begin{rk}\label{rem:deletion}
 By virtue of Assumption \ref{assn:main} (\ref{assn:main0}), case (\ref{case:deletion}) above does not apply to $i = 0$. 
 Furthermore, case (\ref{case:deletion}) above corresponds to the operation of Deletion of a subscheme from the parabolic divisor of the spectral sheaf in Section 6 of \cite{A-Sz}. 
 Specifically, denote by $\pi_1^* D_{\red}$ the restriction to the spectral curve of the pull-back of the parabolic divisor (\ref{eq:D-red})
 by $\pi_1: \CP \times \CPt \to \CP$. Then we delete the subscheme of $\pi_1^* D_{\red}$ supported away from the section at infinity $\CP \times \{ \infty \}$. 
 Without this deletion, the parabolic divisor of the transformed holomorphic bundle would acquire unwanted additional points of support, placed somewhat 
 arbitrarily on $\Ct \setminus \Pt$, corresponding to the eigenvalues of the restriction of $\theta$ to the space of vectors with $\alpha_i^{j(s)} = 0 = \lambda_i^s$. 
\end{rk}

Cases (\ref{case:alpha-smaller}), (\ref{case:alpha-greater}) may be rephrased as follows: for the unique $m\in \{ 0,1 \}$ satisfying 
\begin{equation}\label{eq:alpha-m}
   \alpha_i^{j(s)} + m -1 < \alpha \leq \alpha_i^{j(s)} + m
\end{equation}
we set 
$$
    \f_i^s (\alpha ) = (z-z_i)^m\f_i^s, \quad \g_i^s (\alpha ) = (z-z_i)^m\g_i^s. 
$$
Define $\F_{\alpha}$ and $\G_{\alpha}$ as the locally free sheaves of $\O_{\CP}$-modules isomorphic to $\E$ away from the points $z_i$ and locally generated by the sections 
$\f_i^s (\alpha )$ and $\g_i^s (\alpha )$ respectively, for all $1\leq s \leq r$. 
With these definitions $\F_{\alpha}, \G_{\alpha}$ clearly form a left-continuous, decreasing sequence of locally free sheaves 
parametrized by $[0,1)$. 
\begin{prop}
 With the above definitions $\theta$ induces for all $\alpha \in [0,1)$ a sheaf morphism 
$$
 \theta_{\alpha} : \F_{\alpha} \to \G_{\alpha} \otimes K_{\CP}(\log(z_1)+\cdots +\log(z_n) + 2\cdot z_0).
$$
\end{prop}
\begin{proof}
 The question is local near the points $z_i$. Let us first treat the case $i\neq 0$. 
 Using the trivialization defined in the proof of Claim \ref{clm:-theta-block-diagonal} we may write 
\begin{equation}\label{eq:theta-on-eis}
   \theta (\e_i^s ) =  \frac{\lambda_i^s \d z}{z-z_i} \e_i^s + \sum_{s'} \vartheta_{ss'}(z) \e_i^{s'}
\end{equation}
where the sum ranges over $1\leq s' \leq r$ such that $\e_i^{s'}$ is an eigenvector of $\res_{z_i}(\theta)$ for the same eigenvalue $\lambda_i^s$ as $\e_i^s$ 
and $\vartheta_{ss'}$ are meromorphic $1$-forms with at most simple poles. Let us distinguish four cases, and use the notation (\ref{eq:alpha-m}).

First, if $\alpha_i^{j(s)} = 0 = \lambda_i^s$, then $\f_i^s = \e_i^s$ and by Assumption \ref{assn:main} (\ref{assn:main2}) all the indices $s'$ in the sum in 
(\ref{eq:theta-on-eis}) also satisfy $\alpha_i^{j(s')}=0$. In particular, for all such indices $s'$ we have $\g_i^s = \e_i^s$ and 
\begin{align*}
    \theta (\f_i^s (\alpha )) & =  \theta (\f_i^s) \\
    & = \theta (\e_i^s ) \\
    & = \frac{\lambda_i^s \d z}{z-z_i} \e_i^s + \sum_{s'} \vartheta_{ss'}(z) \e_i^{s'} \\
    & = \frac{\lambda_i^s \d z}{z-z_i} \g_i^s + \sum_{s'} \vartheta_{ss'}(z) \g_i^{s'}, 
\end{align*}
which is clearly in $\G_{\alpha}\otimes K(z_i)$. 

Second, if $\alpha_i^{j(s)} = 0 \neq \lambda_i^s$ and $k(s)< -1$ then again $\f_i^s = \e_i^s$ and 
$$
  \theta (\f_i^s (\alpha )) = (z-z_i)^m \frac{\lambda_i^s \d z}{z-z_i} \e_i^s + (z-z_i)^m \sum_{s'} \vartheta_{ss'}(z) \e_i^{s'}. 
$$
Now there are two possibilities: either $\e_i^{s'} = \g_i^{s'}$ or $\e_i^{s'} = (z-z_i)^{-1} \g_i^{s'}$. In the first case the $(z-z_i)$-adic valuation of the coefficient of $\g_i^{s'}$ 
on the right-hand side is at least $m-1$, hence the $(z-z_i)$-adic valuation of the coefficient of $\g_i^{s'}(\alpha )$ is at least $-1$. 
The other possibility occurs if and only if $\alpha_i^{j(s')} = 0, k(s')\geq -1$.  Then we necessarily have $j(s) = 0 = j(s')$. 
On the other hand, as $k(s)< -1$ and $N_i^0$ lowers the index of the weight-filtration, the coefficient 
$\vartheta_{ss'}(z)$ actually has no pole at $z_i$, and it again follows that the $(z-z_i)$-adic valuation of the coefficient of $\g_i^{s'}$ 
on the right-hand side is at least $m-1$. 

Third, if $\alpha_i^{j(s)} = 0 \neq \lambda_i^s$ and $k(s)\geq -1$ then $\f_i^s = (z-z_i) \e_i^s$ so 
$$
  \theta (\f_i^s (\alpha )) = (z-z_i)^{m+1} \frac{\lambda_i^s \d z}{z-z_i} \e_i^s + (z-z_i)^{m+1} \sum_{s'} \vartheta_{ss'}(z) \e_i^{s'} 
$$
and the coefficient of $\g_i^{s'}(\alpha )$ in each term on the right-hand side has valuation at least $-1$. 

Finally, if $\alpha_i^{j(s)} > 0$ then by Assumption \ref{assn:main} (\ref{assn:main2}) all the indices $s'$ in the sum in 
(\ref{eq:theta-on-eis}) also satisfy $\alpha_i^{j(s')}>0$, and we conclude as in the first three cases. 

The case $i=0$ can be treated similarly with respect to a local trivialization $\e_i^s$ satisfying (in addition to the 
property of Claim \ref{clm:-theta-block-diagonal}) that every $\e_i^s$ is an eigenvector of $A$. 
\end{proof}

Now, in the same way as for $\theta$, we may consider the twisted Dolbeault complex 
\begin{equation}\label{eq:theta-alpha-zeta}
  \theta_{\alpha, \zeta} = \theta_{\alpha} \otimes s_{\infty} - \frac{\zeta}{2}\Id_{\E}\d z \otimes s_0  : \pi_1^*\F_{\alpha} \to \pi_1^*\G_{\alpha}\otimes K_{\CP}(2\cdot z_0 + z_1+\cdots + z_n) \otimes \pi_2^*  \O_{\CPt}(1)
\end{equation}
where $\Id_{\E}$ is the morphism induced by the identity of $\E$. For all $\alpha \in [0,1)$ let us define 
\begin{equation}\label{eq:transformed-parabolic}
  \Et_{\alpha, \zeta} = \H^1(\theta_{\alpha, \zeta}).
\end{equation}
This will be the fiber over $\zeta$ of a sheaf $\Et_{\alpha}$. 
It follows from the decreasing property that for any $\alpha' \geq \alpha$ there exists a sequence of complexes 
$$
  \theta_{\alpha'} \to \theta_{\alpha}, 
$$
hence the snake lemma implies a sheaf inclusion 
\begin{equation}\label{eq:decreasing}
  \Et_{\alpha'} \subset \Et_{\alpha}  .
\end{equation}
Moreover, the family $\Et_{\bullet}$ is clearly left-continuous. 
\begin{clm}
For all $\alpha \in [0,1)$ the sheaf $\Et_{\alpha}$ is a locally free subsheaf of $\Et_0$ of full rank (\ref{eq:transformed-rank}). 
\end{clm}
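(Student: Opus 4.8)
The plan is to adapt the vanishing result of Proposition \ref{prop:hypercohomology} to the parabolically modified complex (\ref{eq:theta-alpha-zeta}) and then invoke the theorem on cohomology and base change. Throughout, fix $\alpha \in [0,1)$ and regard (\ref{eq:theta-alpha-zeta}) as a two-term complex of locally free sheaves on $\CP$ depending holomorphically on $\zeta \in \CPt$, so that $\Et_\alpha = R^1\pi_{2*}$ of the corresponding complex on $\CP \times \CPt$ and $\Et_{\alpha,\zeta} = \H^1(\theta_{\alpha,\zeta})$ as in (\ref{eq:transformed-parabolic}). First I would record that $\Et_\alpha$ is a coherent subsheaf of $\Et_0$: coherence follows from Grauert's coherence theorem for the proper projection $\pi_2$, while the inclusion $\Et_\alpha \subset \Et_0$ is the special case $\alpha' = \alpha \geq 0$ of (\ref{eq:decreasing}). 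It then remains only to show that $\Et_\alpha$ is locally free of rank $\widehat r$.

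Next I would prove that for every $\zeta \in \CPt$ the differential $\theta_{\alpha,\zeta}\colon \F_\alpha \to \G_\alpha \otimes K_{\CP}(2\cdot z_0 + z_1 + \cdots + z_n)$ is injective as a morphism of sheaves on $\CP$. Since $\F_\alpha$ is locally free, hence torsion-free, injectivity can be tested at the generic point of $\CP$; there $\F_\alpha = \G_\alpha = \E$ and the differential is $\theta - \frac{\zeta}{2}\Id_\E\,\d z$ (respectively $-\frac12\Id_\E\,\d z$ for $\zeta = \infty$). This fails to be injective only if $\frac{\zeta}{2}\,\d z$ is a globally constant eigenvalue of $\theta$, which would produce a saturated Higgs subbundle $(\O_{\CP}(m), \frac{\zeta}{2}\,\d z)$ — excluded by Claim \ref{assn:main3}. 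From injectivity and the equality of the ranks of $\F_\alpha$ and $\G_\alpha \otimes K_{\CP}(2\cdot z_0 + z_1 + \cdots + z_n)$, the cokernel of $\theta_{\alpha,\zeta}$ is a torsion (skyscraper) sheaf, and the hypercohomology spectral sequence then gives $\H^0(\theta_{\alpha,\zeta}) = H^0(\ker \theta_{\alpha,\zeta}) = 0$ and $\H^2(\theta_{\alpha,\zeta}) = H^1(\coker \theta_{\alpha,\zeta}) = 0$ for all $\zeta \in \CPt$. This is exactly the analog of Proposition \ref{prop:hypercohomology} for the $\alpha$-modified complex, and it is the step I expect to require the most care — in particular the behaviour at $\zeta \in \Pt$ and at $\zeta = \infty$, where the elementary modifications $\F_\alpha, \G_\alpha$ and the double pole at $z_0$ interact.

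Finally, since the restriction of $\pi_2^*\O_{\CPt}(1)$ to each fiber $\CP \times \{\zeta\}$ is trivial, the holomorphic Euler characteristic $\chi(\theta_{\alpha,\zeta}) = \chi(\F_\alpha) - \chi(\G_\alpha \otimes K_{\CP}(2\cdot z_0 + z_1 + \cdots + z_n))$ is independent of $\zeta$; combined with the vanishing of $\H^0$ and $\H^2$ this forces $\dim \Et_{\alpha,\zeta} = \dim \H^1(\theta_{\alpha,\zeta})$ to be constant on the connected base $\CPt$. By the theorem on cohomology and base change (Grauert), $\Et_\alpha$ is therefore locally free, and its rank equals the common fiber dimension, which I would evaluate at $\zeta = \infty$ exactly as in the computation of (\ref{eq:transformed-rank}): the differential specializes to $-\frac12\Id_\E\,\d z$, so $\H^1 = H^0\big((\G_\alpha \otimes K_{\CP}(2\cdot z_0 + z_1 + \cdots + z_n))/\F_\alpha\big)$, a skyscraper sheaf supported at $z_1,\ldots,z_n$. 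The key bookkeeping point is that passing from $\alpha = 0$ to general $\alpha$ multiplies both $\f_i^s$ and $\g_i^s$ by the same power $(z-z_i)^{m(s)}$ as in (\ref{eq:alpha-m}); hence each index with $\f_i^s = \g_i^s$ (the non-deletion indices) still contributes a length-one quotient, each deletion index still contributes nothing, and the total length is again $\widehat r$, independent of $\alpha$. Together with the inclusion $\Et_\alpha \subset \Et_0$ this proves the claim.
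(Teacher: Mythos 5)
Your proof is correct and follows essentially the same route as the paper: fiberwise vanishing of $\H^0$ and $\H^2$ for the $\alpha$-modified complex (the analogue of Proposition \ref{prop:hypercohomology}, which you usefully make explicit via generic injectivity and Claim \ref{assn:main3}), constancy of $\dim \H^1(\theta_{\alpha,\zeta})$ by the index/Euler-characteristic argument, and the rank count at $\zeta = \infty$ using that $\f_i^s$ and $\g_i^s$ are twisted by the same power of $(z-z_i)$. The only (harmless) divergence is the local-freeness step, where you invoke Grauert's base-change theorem while the paper argues more cheaply that $\Et_{\alpha}$ is a torsion-free subsheaf of the locally free sheaf $\Et_0$ on the smooth curve $\CPt$, hence locally free.
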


\begin{proof}
 The sheaves $\Et_{\alpha}$ are clearly $\O_{\CPt}$-coherent. 
 Left-continuity of $\Et_{\bullet}$ follows immediately from left-continuity of $\F_{\bullet}$ and of $\G_{\bullet}$. 
 The dimension count of the fiber of $\Et_{\alpha}$ at $\infty \in \CPt$ can be carried out similarly to the case of $\Et$, 
 and since we modify $\g_i^s(\alpha )$ in the same way as we do for $\f_i^s(\alpha )$, the computation gives the same value 
 (\ref{eq:transformed-rank}). Furthermore, the same argument as in Proposition \ref{prop:hypercohomology} shows that the dimension 
 of the fiber over $\zeta$ is independent of $\zeta\in \Ct \setminus \Pt$.  For $\alpha > 0$, the sheaves $\Et_{\alpha}$ are 
 subsheaves of the locally free sheaf $\Et$, in particular they are torsion-free. Since $\CPt$ is non-singular, this implies 
 that $\Et_{\alpha}$ is locally free too. 
\end{proof}

Next, let us set
\begin{equation}
 \widehat{D}_{\red} = \mbox{div} (\Pt ) + \infty ,
\end{equation}
where $\mbox{div}$ stands to denote the simple effective divisor associated to a set of distinct points. 
Let us extend the definition of $\Et_{\alpha}$ to all $\alpha \in \R$ by the requirement 
$$
 \Et_{\alpha + 1} = \Et_{\alpha} (-\widehat{D}_{\red} ). 
$$
\begin{prop}
Then, the family $\{ \Et_{\alpha} \}_{\alpha \in \R}$ is an $\R$-parabolic sheaf with divisor $\widehat{D}_{\red}$. 
\end{prop}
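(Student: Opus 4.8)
The plan is to check, one by one, the ingredients in the definition of an $\R$-parabolic sheaf — coherence of each member, the decreasing property, left-continuity (\ref{R-par-sheaf-1}), and the twist relation (\ref{R-par-sheaf-2}) — exploiting the fact that the real work has already been done on the fundamental interval $[0,1)$, so that the periodic extension reduces every remaining verification to a single boundary point. First I would dispose of the two formal axioms. The twist relation $\Et_{\alpha+1}=\Et_\alpha(-\widehat{D}_{\red})$ holds tautologically, being the defining requirement of the extension to all of $\R$. Coherence — indeed local freeness — was established on $[0,1)$ in the preceding Claim, and it is preserved by tensoring with the line bundle $\O_{\CPt}(-m\widehat{D}_{\red})$, so every $\Et_\alpha$ is locally free. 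It therefore remains only to prove that the family is decreasing and left-continuous on all of $\R$.

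Next I would reduce both of these to the boundary $\alpha=1$. On $[0,1)$ the decreasing property is precisely the inclusion (\ref{eq:decreasing}) and left-continuity was recorded in the Claim. Since the functor $-\otimes\O_{\CPt}(-m\widehat{D}_{\red})$ is exact and inclusion-preserving, both properties propagate verbatim from $[0,1)$ to every translated interval $[m,m+1)$. The only genuinely new content is consistency across the integer boundaries, and by the same twisting it suffices to treat $\alpha=1$: I must show that $\Et_1\subseteq\Et_\alpha$ for all $\alpha\in[0,1)$ and that $\Et_{1-\varepsilon}=\Et_1$ for small $\varepsilon>0$. Both of these follow once I identify the stabilised value $\Et_{1^-}:=\Et_{1-\varepsilon}$ with $\Et_0\otimes\O_{\CPt}(-\widehat{D}_{\red})$.

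This identification is the heart of the matter. Reading off the frames, as $\alpha\to1^-$ every generator $\f_i^s,\g_i^s$ outside the deletion directions gets multiplied by the local uniformiser ($z-z_i$ at a finite point, $z^{-1}$ at $z_0=\infty$), while the deletion directions with $\alpha_i^{j(s)}=0=\lambda_i^s$ — which by Remark \ref{rem:deletion} occur only for $i\geq1$ — are left unchanged. Feeding this into the hypercohomology description of $\Et$ (Propositions \ref{prop:L2Dol} and \ref{prop:hypercohomology}), I would argue that uniformly raising the weights from $0$ to $1^-$ twists the transformed sheaf by exactly $\O_{\CPt}(-\widehat{D}_{\red})$, the finite parabolic points $z_i$ accounting for the part $\mbox{div}(\Pt)$ and the second-order point $z_0$ for the point $\infty\in\CPt$; this is the Nahm-transform exchange of the two divisors.

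The hard part will be exactly this last step, and the subtlety is the deletion: the upstairs family $\F_\alpha,\G_\alpha$ is \emph{not} periodic, since the deletion directions are frozen, so downstairs periodicity cannot be deduced by a purely formal argument. Instead I would show that the frozen directions contribute nothing to $\H^1$ — that they are acyclic, consistently with the rank formula (\ref{eq:transformed-rank}) in which the deletion removes exactly $\dim(\Gr_i^0)_0$ — so that the transform only sees the modified directions, whose uniform twist produces the clean line-bundle twist downstairs. I expect the bookkeeping at $z_0$, matching a second-order modification at infinity against the single point $\infty\in\CPt$, to be the most delicate point of the argument.
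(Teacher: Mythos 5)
Your formal reductions are fine and agree with the paper's implicit setup: the twist axiom is tautological, local freeness propagates under tensoring with $\O_{\CPt}(-m\widehat{D}_{\red})$, and everything reduces to the integer boundary (the paper likewise reduces to $\alpha'=1>\alpha\geq 0$). But the step you yourself call the heart of the matter fails: the identification $\Et_{1^-}=\Et_0\otimes\O_{\CPt}(-\widehat{D}_{\red})$ is false in general, and the remark immediately following this proposition in the paper says so explicitly. The elementary modifications producing $\F_\alpha,\G_\alpha$ are supported at the points $z_i$, and the directions with $\alpha_i^{j(s)}=0=\lambda_i^s$ are frozen for \emph{all} $\alpha$; consequently any branch of the spectral curve $\Sigma$ passing near a point of $\Pt$ without meeting $\pi_1^{-1}(\infty)$ --- for instance a branch over a finite point of $\CP$ where an eigenvalue of $2\theta/\d z$ crosses the value $\zeta_j$, or a frozen branch through $(z_i,\zeta_j)$ --- carries a subsheaf of $\Et_{1^-}$ that is \emph{not} twisted as $\alpha\uparrow 1$. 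This is precisely why the extension is defined by the manual twist $\Et_{\alpha+1}=\Et_{\alpha}(-\widehat{D}_{\red})$ (the ``Addition'' operation of \cite{A-Sz}) rather than by the hypercohomology formula (\ref{eq:transformed-parabolic}) for all $\alpha$: the upstairs family is not periodic, and neither is its transform --- a point you correctly notice, but then claim to neutralize by an argument that does not work (see below). Only the inclusion $\Et_0(-\widehat{D}_{\red})\subseteq\Et_{1^-}$ holds, and it can be strict; hence your strong left-continuity claim $\Et_{1-\varepsilon}=\Et_1$ is exactly what fails whenever such branches exist, and the correct (and the paper's) task is only the decreasing property $\Et_1\subseteq\Et_{\alpha}$ for $0\leq\alpha<1$. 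The paper obtains this from the spectral description $\Et_{\alpha}=(\pi_2)_*M_{\alpha}$ together with the branch relation (\ref{eq:zeta-z-relationship}): on the branches through $(z_i,\infty)$, multiplication by $(z-z_i)$ equals, up to a unit, multiplication by $\zeta^{-1}$, a defining equation of $\infty\in\CPt$, while on the frozen branches the subsheaf is constant in $\alpha$ and hence trivially contains the twisted one --- note the paper's deliberate wording ``contains (or is possibly equal to)''.

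Your proposed mechanism for the (false) equality is also wrong in itself: the frozen directions are not acyclic for $\H^1$. They drop out only of the \emph{fiber} of $\Et$ at $\zeta=\infty$, which is what the rank count (\ref{eq:transformed-rank}) computes; as subsheaves over $\Ct$ they contribute fully, being supported on branches of $\Sigma$ lying over finite values of $\zeta$, namely the eigenvalues of $\theta$ restricted to those directions. Indeed, Remark \ref{rem:deletion} exists precisely because they \emph{do} contribute: without Deletion they would create unwanted parabolic points on $\Ct\setminus\Pt$; if they were acyclic, no Deletion would be needed at all. You have conflated the fiber at $\infty\in\CPt$ with the sheaf near points of $\Ct$. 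Finally, your divisor bookkeeping is reversed: by (\ref{eq:zeta-z-relationship}) the twists at the \emph{finite} points $z_i$ account for the twist at $\infty\in\CPt$, while the twist at $z_0=\infty$ accounts for the twist along $\mbox{div}(\Pt)$ (here Assumption \ref{assn:main} (\ref{assn:main0}) guarantees the residue eigenvalues at $z_0$ are nonzero, so that $\zeta-\zeta_j$ is comparable to $z^{-1}$ along the branches through $(\infty,\zeta_j)$) --- not the other way around, as you wrote.
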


\begin{rk}
 It would seem appealing to define the transformed $\R$-parabolic bundle by (\ref{eq:transformed-parabolic}) for all $\alpha \in \R$; 
 however, this definition would not lead to a parabolic sheaf for the following reason. Some branches of $\Sigma$ near the points of $\Pt$ 
 do not intersect $\pi_1^{-1} (\infty )$, and the local subbundle of $\Et$ corresponding to hypercohomology classes supported on such 
 branches do not get twisted by $z^{-1}$ as we increase $\alpha$ by $1$. In the above definition we added manually the twist on this subbundle too. 
 This is the same procedure as Addition of a subscheme to the parabolic divisor in Section 6 of \cite{A-Sz}, an inverse 
 operation of Deletion of Remark \ref{rem:deletion}. 
\end{rk}
\begin{proof}
 We merely need to show the decreasing property (\ref{eq:decreasing}). 
 It is sufficient to prove it for $\alpha' = 1 > \alpha \geq 0$. 
 The proof follows the ideas of Theorem 8.5 \cite{A-Sz}. 
 Define the spectral sheaf $M_{\alpha}$ on $\CP \times \CPt$ as the cokernel sheaf of (\ref{eq:theta-alpha-zeta}) with $\zeta\in \CPt$ varying 
 and the spectral curve $\Sigma \subset \CP \times \CPt$ as the support of $M_{\alpha}$ (it is easy to see that the support does not depend on $\alpha$). 
 Then, by the spectral sequence argument of Proposition 4.22 \cite{Sz-these}, 
 $\Et_{\alpha}$ can be defined as the direct image 
 $$
  \Et_{\alpha} = (\pi_2)_* M_{\alpha}
 $$
 of $M_{\alpha}$ by $\pi_2$. 
 It follows from Assumption \ref{assn:main} (\ref{assn:main2}) and a simple local computation near $z_i$ that the branches of $\Sigma$ corresponding 
 to vectors not satifying $\alpha_i^{j(s)} = 0 = \lambda_i^s$ are precisely the ones passing through the point $(z_i,\infty) \in \CP \times \CPt$. 
 In addition, as in Claim 4.24 \cite{Sz-these}, up to higher order terms along these branches of $\Sigma$ we have 
 \begin{equation}\label{eq:zeta-z-relationship}
    \zeta = \frac{\lambda_i^s}{z-z_i}.
 \end{equation}
 Thus, multiplying $\f_i^s, \g_i^s$ by $(z-z_i)$ for $\alpha > \alpha_i^{j(s)}$ amounts (up to a scalar) to multiplying them by $\zeta^{-1}$, 
 which is a local defining equation for the point at infinity $\infty \in \CPt$. 
 Therefore, the hypercohomology classes in $\Et_{\alpha}$ defined by representatives locally expressible by some holomorphic linear 
 combination of vectors of the form $(z-z_i)\g_i^s$ not satifying $\alpha_i^{j(s)} = 0 = \lambda_i^s$ are basically 
 hypercohomology classes in $\Et$ defined by representatives locally expressible by some holomorphic linear 
 combination of vectors of the form $\g_i^s$ of the same type, multiplied by $\zeta^{-1}$. 
 Said differently, the subbundle of $\Et_{\alpha}$ near $\infty\in\CPt$ generated by such classes is a subbundle of $\Et(-\infty)$. 
 On the other hand, the subbundle of $\Et_{\alpha}$ generated by hypercohomology classes represented by a linear combination of vectors 
 $\g_i^s(\alpha)$ with $\alpha_i^{j(s)} = 0 = \lambda_i^s$ is clearly the same as the corresponding subbundle of $\Et$. 
 In both cases, the corresponding subbundle of $\Et_{\alpha}$ contains (or is possibly equal to) that of $\Et(-\infty)$. 
 
 A similar argument works for the points $z_i$ with $i \neq 0$. 
\end{proof}

\section{Transformed Higgs field}\label{sec:st}

In this section, we define the transformed Higgs field $\tt$ and show that it preserves the parabolic structure. 
Recall the definition made in (\ref{eq:transformed-parabolic}) and use the convention $\Et = \Et_0$.  

\begin{defn}
The transformed Higgs field $\tt$ is the morphism 
$$
  \Vt \to \Vt \otimes K_{\CPt} (*\widehat{D}_{\red} )
$$
induced by multiplication by $-z/2 \d \zeta$. 
(Here $*\widehat{D}_{\red}$ means that the morphism may have poles of some unspecified order at the support of $\widehat{D}_{\red}$.)
\end{defn}

\begin{thm}\label{thm:main}
\begin{enumerate}
 \item  \label{thm:main1}
 The transformed Higgs field $\tt$ is meromorphic, has a first-order pole at the points of $\Pt$ and a second-order pole at 
 $\infty \in \CPt$ with semi-simple leading-order term, and no other poles. 
 \item \label{thm:main2}
 The residues of $\tt$ at the parabolic points are compatible with the parabolic structure corresponding to the 
 $\R$-parabolic structure $\Et_{\bullet}$, and the leading-order term of $\tt$ at $\infty \in \CPt$ preserves the parabolic filtration. 
 \item \label{thm:main3}
 The map (\ref{eq:Nahm}) is holomorphic for the Dolbeault complex structures of the moduli spaces. 
\end{enumerate}
\end{thm}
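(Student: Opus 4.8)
The plan is to reduce all three assertions to the spectral description of the transform set up in Section~\ref{sec:par}. Recall that $\Et_\alpha = (\pi_2)_* M_\alpha$, where $M_\alpha$ is the cokernel sheaf of $\theta_{\alpha,\zeta}$ supported on the spectral curve $\Sigma \subset \CP\times\CPt$, and that under this identification the transformed Higgs field $\tt$ is nothing but multiplication by the $\CP$-coordinate $z$ (up to the universal factor $-\tfrac12\d\zeta$). Thus the eigenvalues of $\tt$ over a point $\zeta$ are exactly the $z$-coordinates of the points of $\Sigma$ lying above $\zeta$, and the analytic behaviour of $\tt$ is dictated by the geometry of the projection $\pi_2\colon \Sigma \to \CPt$. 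This is the mechanism behind Theorem~8.5 of \cite{A-Sz}; the point of the present argument is to push it through in the presence of nilpotent parts of the residues.

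For assertion~(\ref{thm:main1}) I would inspect $\Sigma$ over the two distinguished loci of $\CPt$. Over $\infty\in\CPt$ every branch has bounded $z$, and by (\ref{eq:zeta-z-relationship}) the branches actually reaching $\zeta=\infty$ are those through the points $(z_i,\infty)$, along which $z\to z_i$; writing $\eta=\zeta^{-1}$ for the coordinate at $\infty$, the field $-\tfrac{z}{2}\d\zeta = \tfrac{z}{2}\eta^{-2}\d\eta$ has a double pole whose leading coefficient equals $\tfrac{z_i}{2}$ on the branch over $z_i$, and this term is semisimple because the $z_i$ are distinct. Over a point $\zeta_j\in\Pt$, the expansion (\ref{eq:Higgsinfinity})--(\ref{eq:Dinfinity}) of $\theta$ at $\infty\in\CP$ shows that the branches reaching $\zeta_j$ are those through $(\infty,\zeta_j)$, along which $z$ has a simple pole in $\zeta-\zeta_j$; hence $\tt$ acquires exactly a first-order pole. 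Away from $\Pt\cup\{\infty\}$ the curve $\Sigma$ is a finite cover of $\CPt$ on which $z$ is bounded, so $\tt$ is holomorphic there, which establishes~(\ref{thm:main1}).

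For assertion~(\ref{thm:main2}) I would argue locally. The leading term at $\infty\in\CPt$ was just seen to be the block scalar $\tfrac{z_i}{2}\Id$, which preserves any compatible filtration, in particular $\Ft_\bullet$; this case is immediate. The residues at $\zeta_j\in\Pt$ carry the real content: from the same expansion the residue of $\tt$ at $\zeta_j$ is governed by the subleading matrix $B$ of $\theta$ at $\infty\in\CP$ restricted to the $\zeta_j$-eigenspace of $A$, while the parabolic data of $\Et_\bullet$ at $\zeta_j$ is inherited through $\Et_\alpha=(\pi_2)_*M_\alpha$ from the filtration $F_0^\bullet$, the weight filtration $W_{0,\bullet}^j$ and the nilpotent parts $N_0^j$ governing $\theta$ near $\infty\in\CP$. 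I would trace the action of multiplication by $z$ on the local generators of $M_\alpha$ concentrated on the branches through $(\infty,\zeta_j)$ and check that it respects the modifications defining $\F_\alpha$ and $\G_\alpha$; the inputs are that $B$, and hence each $N_0^j$, lies in $\h$ and preserves $F_0^\bullet$, together with Assumption~\ref{assn:main}(\ref{assn:main0}), which guarantees that $0$ is not an eigenvalue of $\res_{z_0}(\theta)^j$. I expect this to be the main obstacle, precisely because the weight filtrations attached to the nilpotent parts must be matched across the transform --- the feature absent from the semisimple setting of \cite{A-Sz}.

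For assertion~(\ref{thm:main3}) the guiding idea is that the whole construction is holomorphic in families. The complex $\theta_{\alpha,\zeta}$ of (\ref{eq:theta-alpha-zeta}) depends holomorphically --- indeed algebraically --- on the Dolbeault data $(\E,\theta,F_i^j)$, and its hypercohomology in degrees $0$ and $2$ vanishes identically by Proposition~\ref{prop:hypercohomology}; consequently the first hypercohomology $\Et_{\alpha,\zeta}$, the parabolic sheaves $\Et_\alpha$, and the Higgs field $\tt$ given by multiplication by $z$ all vary holomorphically with the input, with no reference to the metric $h$. I would make this precise by realising the transform of a holomorphic family parametrised by a base $S$ as the relative first hypercohomology of the induced family of complexes on $\CP\times\CPt\times S$; this is a coherent-sheaf operation, hence holomorphic over $S$, and since the Dolbeault moduli space carries exactly the complex structure for which such families are holomorphic, the map (\ref{eq:Nahm}) is holomorphic.
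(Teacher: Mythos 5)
Your overall route --- reading $\tt$ as multiplication by $z$ on the spectral sheaf $M_\alpha$ with $\Et_\alpha = (\pi_2)_* M_\alpha$ --- is the paper's own, and your treatment of parts (\ref{thm:main1}) and (\ref{thm:main3}) is sound. For (\ref{thm:main1}) the paper simply cites Theorems 4.9, 4.30 and 4.31 of \cite{Sz-these}; your branch analysis (the coordinate computation $-\frac z2 \d\zeta = \frac z2 \eta^{-2}\d\eta$ together with $z = z_i + \lambda_i^s \eta + \cdots$ from (\ref{eq:zeta-z-relationship}), and the simple pole $z \sim 2\mu/(\zeta - \zeta_j)$ over $\zeta_j \in \Pt$ coming from the subleading term $B$, with $\mu \neq 0$ by Assumption \ref{assn:main}(\ref{assn:main0})) is exactly the mechanism behind those cited results, so re-sketching it is acceptable. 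Part (\ref{thm:main3}) is, as in the paper, the one-line observation that the construction is algebraic in $(\E,\theta,F_i^j)$ and never uses $h$.

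The genuine gap is part (\ref{thm:main2}), which you do not actually prove: for the residues at $\Pt$ you write that you ``would trace the action of multiplication by $z$ on the local generators of $M_\alpha$ \dots and check that it respects the modifications'' and that you ``expect this to be the main obstacle'' --- that is a plan, not an argument, and it is aimed at a harder computation than is needed. The paper's proof is a short $\O_{\CP}$-linearity observation: by (\ref{eq:transformed-rank}) every branch of $\Sigma$ meeting $\pi_2^{-1}(\infty)$ passes through some $(z_i,\infty)$ with $1 \leq i \leq n$, so a local section of $\Et_\alpha$ near $\zeta = \infty$ is represented by $\sum_{i=1}^n \sum_s h_i^s(z,\zeta)\, \g_i^s(\alpha)$ with support where $z$ is \emph{holomorphic}; since $\G_\alpha$ is an $\O_{\CP}$-module, multiplying this representative by $-z/2$ keeps it inside $\G_\alpha$, hence $\tt$ preserves each $\Et_\alpha$, and by Claim \ref{clm:parabolic} preservation of all the $\Et_\alpha$ \emph{is} parabolic compatibility --- no case analysis of $B$, $\h$, $N_0^j$ or the weight filtrations enters at all (and an analogous argument handles $\zeta \in \Pt$). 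Relatedly, your claim that the leading-order term at $\infty \in \CPt$ ``preserves any compatible filtration, in particular $\Ft_\bullet$'' and is therefore immediate begs the question: by the criterion of Claim \ref{clm:A-preserves-F}, a block-scalar operator preserves a filtration only when the filtration is the direct sum of its intersections with the blocks, and that compatibility of $\Ft_\bullet$ with the branch decomposition is precisely what the sheaf-level statement $\tt(\Et_\alpha) \subset \Et_\alpha \otimes K_{\CPt}(2\cdot\infty)$ delivers. So as written, the only genuinely new assertion of the theorem in the non-semisimple setting is the one your proposal leaves open.
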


\begin{proof}
 (\ref{thm:main1}) 
 The argument of the proof of Theorem 4.9 \cite{Sz-these} shows that multiplication by $-z/2 \d \zeta$ 
 induces a holomorphic map 
 $$
   \Et \to \Et \otimes K_{\Ct \setminus \Pt}
 $$
 over $\Ct \setminus \Pt$. The argument of the proofs of Theorems 4.30 and 4.31 {\it loc. cit.} shows that it 
 has a first-order pole at $\Pt$ and a second-order pole at $\infty \in \CPt$, with semi-simple leading-order term. 

 For statement (\ref{thm:main2}), it is easy to see that compatibility with the parabolic structure associated to the 
 $\R$-parabolic structure $\Et_{\bullet}$ by Claim \ref{clm:parabolic} means precisely that $\tt$ preserves 
 $\Et_{\alpha}$ for all $\alpha \in \R$. Near $\zeta = \infty$ this can be proved as follows. 
 Recall (\ref{eq:zeta-z-relationship}) that the branches of the spectral curve pass through the points 
 $(z_i,\infty) \in \CP \times \CPt$, and according to the formula (\ref{eq:transformed-rank}) these are all 
 the branches intersecting $\pi_2^{-1} (\infty )$. A local section of $\Et_{\alpha}$ near $\zeta = \infty$ 
 is thus represented in hypercohomology by a linear combination 
 \begin{equation}\label{eq:hypercohomology-class}
    \sum_{i=1}^{n} \sum_{s=1}^r h_i^s (z,\zeta ) \g_i^s (\alpha ) 
 \end{equation}
 for some holomorphic functions $h_i^s$, and its image by $\tt$ is by definition represented by 
 $$
   \left( \sum_{i=1}^{n} \sum_{s=1}^r h_i^s (z,\zeta ) \frac{-z \g_i^s (\alpha )}2 \right) \d \zeta .
 $$
 Now, as $\G_{\alpha}$ is an $\O_{\CP}$-module, the expression in parentheses in this latter formula 
 also defines a section of $\G_{\alpha}$. Therefore, by the definition of $\Et_{\alpha}$, the image by 
 $\tt$ of the class (\ref{eq:hypercohomology-class}) represents a section of $\Et_{\alpha} \otimes K_{\CPt}$. 
 A similar argument works in the case $\zeta \in \Pt$. 
 
 Statement (\ref{thm:main3}) is immediate: the constructions of Sections \ref{sec:Dolbeault} and \ref{sec:par} 
 are algebraic with respect to the Dolbeault complex structure of the moduli spaces corresponding to Higgs bundles $(\E, \theta)$. 
\end{proof}

\bibliography{FM34}
\bibliographystyle{plain}

\end{document}